\renewcommand{\mathbf}{\boldsymbol}
\theoremstyle{plain}
\newtheorem{theorem}{Theorem}[section]
\newtheorem{lemma}[theorem]{Lemma}
\newtheorem{proposition}[theorem]{Proposition}
\newtheorem{corollary}[theorem]{Corollary}
\newtheorem{claim}[theorem]{Claim}
\numberwithin{equation}{section}
\theoremstyle{definition}
\newtheorem{definition}[theorem]{Definition}
\theoremstyle{remark}
\newtheorem{remark}[theorem]{Remark}
\newtheorem{remarks}[theorem]{Remarks}
\newtheorem{example}[theorem]{Example}
\renewcommand{\theenumi}{\roman{enumi}}
\DeclareMathOperator{\identity}{id}
\DeclareMathOperator{\ad}{ad}
\DeclareMathOperator{\Ric}{Ric}
\DeclareMathOperator{\Vol}{Vol}
\DeclareMathOperator{\Rm}{Rm}
\DeclareMathOperator{\Ad}{Ad}
\let\GAMMA=\Gamma
\renewcommand{\Gamma}{\mathit{\GAMMA}}
\let\DELTA=\Delta
\renewcommand\Delta{\mathit{\DELTA}}
\let\THETA=\Theta
\renewcommand{\Theta}{\mathit{\THETA}}
\let\LAMBDA=\Lambda
\renewcommand{\Lambda}{\mathit{\LAMBDA}}
\let\XI=\Xi
\renewcommand{\Xi}{\mathit{\XI}}
\let\PI=\Pi
\renewcommand{\Pi}{\mathit{\PI}}
\let\SIGMA=\Sigma
\renewcommand{\Sigma}{\mathit{\SIGMA}}
\let\PHI=\Phi
\renewcommand{\Phi}{\mathit{\PHI}}
\let\PSI=\Psi
\renewcommand{\Psi}{\mathit{\PSI}}
\let\OMEGA=\Omega
\renewcommand{\Omega}{\mathit{\OMEGA}}
\title{Ricci Flow on homogeneous spaces with two isotropy summands}
\author{Maria Buzano}
\address{Department of Mathematics and Statistics, McMaster University, 1280 Main street West, Hamilton, Ontario L8S 4K1, Canada}
\email{\href{mailto:mbuzano@math.mcmaster.ca}{mbuzano@math.mcmaster.ca}}
\date{}
\begin{document}

\maketitle

\begin{abstract}
We consider the Ricci flow equation for invariant metrics on compact and connected homogeneous spaces whose isotropy representation decomposes into two irreducible inequivalent summands.
By studying the corresponding dynamical system, we completely describe the behaviour of the homogeneous Ricci flow on this kind of spaces.
Moreover, we investigate the existence of ancient solutions and relate this to the existence and non-existence of invariant Einstein metrics.
\end{abstract}

\section{Introduction}
Let $(M,g)$ be a Riemannian manifold of dimension $n$. A one-parameter family of Riemannian metrics $\{g(t)\}_{t\in[0,T)}$ on $M$ is said to be a \emph{Ricci flow} with initial metric $g$ if it satisfies
\begin{equation}\label{eqn:1}
\frac{\partial}{\partial t}(g(t))=-2\Ric(g(t)),
\end{equation}
with $g(0)=g$. 
The Ricci flow was first introduced by Hamilton in \cite{Hamilton:2} who showed that compact 3-manifolds with strictly positive Ricci curvature are space forms with positive curvature.

An important property of the Ricci flow is that it preserves symmetries of the initial metric $g$. 
This is due to the fact that the Ricci tensor is invariant under diffeomorphisms of the manifold $M$. 
It is then natural to investigate the Ricci flow equation on Riemannian manifolds which admit a transitive action by a closed Lie group $G$ of isometries. 
These are called homogeneous Riemannian manifolds.
On this kind of spaces, we can restrict our attention to those Riemannian metrics which are invariant under the action of $G$ and this property is preserved under the Ricci flow. 

In this paper, we are going to consider compact and connected homogeneous spaces such that the isotropy representation decomposes into two irreducible inequivalent summands.
We will completely describe the behaviour of the \emph{homogeneous Ricci flow} (HRF) on this kind of spaces. 
More precisely, we will prove that the HRF always develops a type I singularity in finite time and analyse the different singular behaviours that can occur, as we approach the singular time.
We will also investigate the existence of ancient solutions to the HRF and show how this topic relates to the existence and non-existence of invariant Einstein metrics.
In a subsequent paper, we aim to generalise some of these results to a higher number of summands.

Some of the homogeneous spaces that we consider are interesting because they include examples of compact homogeneous spaces which do not admit any invariant Einstein metric. 
In this case, the HRF cannot converge, as Ricci flows can only converge to Einstein metrics, so it is interesting to see what behaviours might occur in the flow.
The existence and non-existence of invariant Einstein metrics on compact homogeneous spaces have been studied extensively by many authors \cite{Wang-Ziller:1, Bohm:2, Bohm:1, Bohm-Wang-Ziller:1, Bohm-Kerr:1, Dickinson-Kerr:1}. 
The lowest dimensional non-existence example is the 12-dimensional manifold $SU(4)/SU(2)$ and it was found by Wang and Ziller \cite{Wang-Ziller:1}.
This manifold is also an example of a compact homogeneous space with two irreducible inequivalent summands in the isotropy representation.
Later, B\"ohm and Kerr \cite{Bohm-Kerr:1} showed that this is the least dimensional example of a compact homogeneous space which does not carry any invariant Einstein metric. 
New non-existence examples were produced by B\"ohm in \cite{Bohm:1}. 

We would also like to mention that HRF for other spaces has been studied before \cite{Anastassiou-Chrysikos:1,Isenberg-Jackson:1,Isenberg-Jackson-Lu:1,Lauret:2,Payne:1,Lauret:3}.

The outline of the paper is as follows.
In section \ref{sec:2}, we recall some basic notions and facts about homogeneous Riemannian manifolds, the Ricci flow equation and the convergence of metric spaces.
In section \ref{sec:3}, we consider the HRF on isotropy irreducible spaces.
In sections \ref{sec:5} and \ref{sec:1}, we analyse the HRF on compact and connected homogeneous spaces whose isotropy representation decomposes into two irreducible inequivalent summands, distinguishing the cases in which there exists an intermediate Lie algebra or not.
In section \ref{sec:4}, we analyse the Ricci soliton that we obtain, by blowing up the solution near the singular time. 
Finally, in section \ref{sec:7}, we make some remarks on the case in which the initial metric is pseudo-Riemannian.

\section*{Acknowledgements}
This paper is part of my PhD thesis \cite{Buzano:2}.
I would like to thank my supervisor Prof. Andrew Dancer for his current support and motivation and Chris Hopper, Prof. Ernesto Buzano and Prof. McKenzie Wang for useful discussions.
I would also like to thank Prof. Lei Ni for bringing to my attention reference \cite{Bakas-Kong-Ni:1}.
Finally, I would like to acknowledge the EPSRC and the Accademia delle Scienze di Torino for financial support.

\section{Preliminaries}\label{sec:2}

\subsection{The Ricci tensor of a homogeneous Riemannian manifolds}
Let $G/K$ be a compact and connected homogeneous space. 
Then, 
\begin{equation*}
\mathfrak g=\mathfrak k\oplus\mathfrak p,
\end{equation*}
where $\mathfrak g$ and $\mathfrak k$ are the Lie algebras of $G$ and $K$, respectively, and $\mathfrak p$ is called \emph{isotropy representation}. 
Let $Q$ be an $\Ad_{|_K}$-invariant scalar product on $\mathfrak p$.
For every $G$-invariant Riemannian metric $g$, $\mathfrak p$ decomposes into $\Ad_{|_K}$-invariant irreducible summands:
\begin{equation}\label{eqn:3}
\mathfrak p=\mathfrak p_1\oplus\mathfrak p_2\oplus\dots\oplus\mathfrak p_l,
\end{equation}
such that $g$ is diagonal with respect to $Q$:
\begin{equation}\label{eqn:4}
g=x_1Q_{|_{\mathfrak p_1}}\oplus x_2Q_{|_{\mathfrak p_2}}\oplus\dots\oplus x_lQ_{|_{\mathfrak p_l}},
\end{equation}
where $x_i>0$, for all $i=1,\dots,l$. 
In general, the decomposition \eqref{eqn:3} is not uniquely determined, but we do have uniqueness for the decomposition of $\mathfrak p$ into \emph{isotypical summands}. 
Each isotypical summand is given by the direct sum of irreducible summands of $\mathfrak p$ which are equivalent to a fixed summand. 
By Schur's lemma, every invariant metric $g$ and its Ricci tensor $\Ric(g)$ respect the splitting of $\mathfrak p$ into isotypical summands. 
We have the following definitions.
\begin{definition}
If the isotropy representation $\mathfrak p$ is irreducible as a $K$-representation, then the homogeneous space is called \emph{isotropy irreducible}.
\end{definition}
\begin{definition}
If the irreducible summands in \eqref{eqn:3} are pairwise inequivalent, then the isotropy representation is called \emph{monotypic}.
\end{definition}
\begin{remark}
In the monotypic case, by Schur's Lemma the Ricci tensor and the metric respect the splitting of $\mathfrak p$.
\end{remark}
Let $Q$ be an $Ad_{|_K}$-invariant scalar product on $\mathfrak p$ such that its restriction to every irreducible summand is a negative multiple of the Killing form $B$ of $G$. Consider now the decomposition \eqref{eqn:3} of $\mathfrak p$ into $\Ad_{|_K}$-invariant irreducible summands such that the $G$-invariant Riemannian metric $g$ diagonalises with respect to $Q$, as in \eqref{eqn:4}. 
Let $r_g$ be the Ricci endomorphism, which is defined as
\begin{equation*}
\Ric(g)(X,Y)=g(r_g(X),Y),
\end{equation*}
for all $X,Y\in\mathfrak p$. By \cite{Wang-Ziller:1} and \cite{Park-Sakane:1}, for every $i\in\{1,\dots,l\}$, the Ricci endomorphism on an isotypical summand $\mathfrak p_i$ is given by
\begin{equation*}
(r_g)_{|_{\mathfrak p_i}}=\Bigg(\frac{b_i}{2x_i}-\frac{1}{2d_i}\sum_{j,k=1}^l{[ijk]\frac{x_k}{x_ix_j}}+\frac{1}{4d_i}\sum_{j,k=1}^l{[ijk]\frac{x_i}{x_jx_k}}\Bigg)\identity_{|_{\mathfrak p_i}},
\end{equation*}
where $d_i=\dim(\mathfrak p_i)$ and $b_i$ is defined by
\begin{equation*}
-B_{|_{\mathfrak p_i}}=b_iQ_{|_{\mathfrak p_i}},
\end{equation*}
for all $i=1,\dots,l$. Moreover, the \emph{structure constants} $[ijk]$, which appear in the above formula, are defined by
\begin{equation*}
[ijk]=\sum{Q([e_\alpha,e_\beta],e_\gamma)^2},
\end{equation*}
where the sum is taken over the $Q$-orthonormal bases $\{e_\alpha\}_\alpha$, $\{e_\beta\}_\beta$ and $\{e_\gamma\}_\gamma$ of $\mathfrak p_i$, $\mathfrak p_j$ and $\mathfrak p_k$, respectively. Note that $[ijk]$ is symmetric in $i,j$ and $k$.

The relations between these quantities have been described in \cite{Wang-Ziller:1}:
\begin{equation}\label{eqn:5}
d_ib_i=2d_ic_i+\sum_{j,k=1}^l{[ijk]},\quad1\leq i\leq l,
\end{equation}
where $c_i$ are the nonnegative constants defined by:
\begin{equation*}
\mathcal C_{\mathfrak p_i,Q}=c_i\cdot\identity_{|_{\mathfrak p_i}},\quad1\leq i\leq l,
\end{equation*}
where $\mathcal C_{\mathfrak p_i,Q}=-\sum_\alpha{\ad(e_\alpha)\circ\ad(e_\alpha)}$ is the Casimir operator for the adjoint representation of $\mathfrak p_i$, for all $i=1,\dots,l$.

We refer the reader to \cite{Besse:1,Wang-Ziller:1} as good references on homogeneous spaces.
\subsection{Singularities in the Ricci flow}
We will now consider the Ricci flow equation in the homogeneous case. We note that, if $M$ is $G$-homogeneous, equation \eqref{eqn:1} becomes a system of nonlinear ordinary differential equations. More precisely, choose a background metric $Q$. Then consider the following decomposition of $\mathfrak p$:
\begin{equation*}
\mathfrak p=\mathfrak p_1\oplus\dots\oplus\mathfrak p_l,
\end{equation*}
where the $\mathfrak p_i$'s are pairwise inequivalent $\Ad_{|K}$-invariant irreducible summands, so that we are in the monotypic case. 
As every $G$-invariant Riemannian metric diagonalises as in \eqref{eqn:4}, if the initial metric is $G$-homogeneous, every solution to the Ricci flow equation will take the form
\begin{equation*}
g(t)=x_1(t)Q_{|_{\mathfrak p_1}}\oplus x_2(t)Q_{|_{\mathfrak p_2}}\oplus\dots\oplus x_l(t)Q_{|_{\mathfrak p_l}},
\end{equation*}
where $x_1(t),\dots,x_l(t)$ are smooth functions of $t$, which are strictly positive for all $t$ for which $g(t)$ is defined. We then have that $g(t)$ defined above is a Ricci flow if and only if $x_1(t),\dots,x_l(t)$ satisfy the system
\begin{equation}\label{eqn:6}
\dot{x}_i(t)=-b_i+\frac{1}{d_i}\sum_{j,k=1}^l{[ijk]\frac{x_k(t)}{x_j(t)}}-\frac{1}{2d_i}\sum_{j,k=1}^l{[ijk]\frac{x_i(t)^2}{x_j(t)x_k(t)}},
\end{equation}
for all $i=1,\dots,l$, and where $\dot{}$ indicates the derivative with respect to $t$, together with the condition $x_i(t)>0$, for all $i=1,\dots,l$.
\newline

We will now recall the definition of singular solution to the Ricci flow. 
\begin{definition}
Let $(M,g)$ be a closed Riemannian manifold.
A solution $g(t)$ to the Ricci flow \eqref{eqn:1} on $M\times[0, T)$, with $T\leq+\infty$, is a \emph{maximal solution} if either $T=+\infty$ or $T<+\infty$ and the norm of the curvature tensor $|\Rm(g(t))(x,t)|$ is unbounded, as $t\rightarrow T$. 
In the latter case, the maximal solution is called \emph{singular}. 
\end{definition}
According to Hamilton \cite{Hamilton:1}, we can classify singular solutions to the Ricci flow into type I and type II singularities. 
We say that a solution $g(t)$, with $t\in[0,T)$, to \eqref{eqn:1} develops a \emph{type I singularity} at $t=T$ if
\begin{enumerate}
\item $T<+\,\infty$,
\item $\sup_{t\in[0,T)}{\left(\sup_{p\in M}{|\Rm(g(t))|_{g(t)}(p,t)}\right)}=+\,\infty$,
\item $\sup_{t\in[0,T)}{\left((T-t)\sup_{p\in M}{|\Rm(g(t))|_{g(t)}(p,t)}\right)}<+\,\infty$.
\end{enumerate}
On the other hand, a solution $g(t)$ to \eqref{eqn:1}, with $t\in[0,T)$, develops a \emph{type II singularity} at $t=T$ if i) and ii) above are satisfied and if
\begin{equation*}
\sup_{t\in[0,T)}{\bigg((T-t)\sup_{p\in M}{|\Rm(g(t))|_{g(t)}(p,t)}\bigg)}=+\,\infty.
\end{equation*}

\subsubsection{Type I singularities in HRF}
Let $g(t)$ be a Ricci flow on $G/K\times[0,T)$, with $T<\infty$ and $g(0)$ $G$-invariant, and suppose that $T$ is a type I singularity for $g(t)$. 
Then, by \cite[Theorems 1.2-1.3]{Enders-Muller-Topping:1} and the homogeneity, as $t\rightarrow T$,
\begin{equation*}
|R(g(t))|\rightarrow+\infty
\end{equation*}
at the type I rate and, if $\Vol_{g(0)}(G/K)<\infty$, then
\begin{equation*}
\Vol_{g(t)}(G/K)\rightarrow0.
\end{equation*}

\subsection{Ancient solutions to the Ricci flow}
\begin{definition}
Let $(M,g)$ be a closed Riemannian manifold. 
A solution $g(t)$ to the Ricci flow \eqref{eqn:1} on $M$ which is defined on the time interval $(-\infty,T)$, with $T<\infty$, is called an \emph{ancient solution}.
\end{definition}
According to Hamilton \cite{Hamilton:1}, we can classify ancient solutions to the Ricci flow in type I and type II. 
We say that an ancient solution $g(t)$ to \eqref{eqn:1} is of \emph{type I} if
\begin{equation*}
\lim_{t\rightarrow-\infty}{\bigg(|t|\sup_{p\in M}{|\Rm(g(t))|_{g(t)}(p,t)}\bigg)}<\infty.
\end{equation*}
Otherwise, we say that the ancient solution is of \emph{type II}. 

These kind of solutions are important, because they arise as limits of blow ups of singular solutions to the Ricci flow near finite time singularities. 

\subsection{A notion of convergence for metric spaces}\label{subsec:1}
The \emph{Hausdorff-Gromov} distance is a way of measuring distances between metric spaces. Let $(A,d_A)$ and $(B,d_B)$ be two metric spaces. An \emph{$\epsilon$-approximation} between $A$ and $B$, denoted by $A\sim_\epsilon B$, is a subset $S\subseteq A\times B$ with the following properties:
\begin{enumerate}
\item both the projections of $S$ to $A$ and $B$ are onto,
\item for all $(p_1,p_2),(q_1,q_2)\in S$, $|d_A(p_1,q_1)-d_B(p_2,q_2)|<\epsilon$.
\end{enumerate}
The \emph{Hausdorff-Gromov distance} between $A$ and $B$ is defined as
\begin{equation*}
d_{\text{H-G}}(A,B)=\inf\{\epsilon|A\sim_\epsilon B\}.
\end{equation*}
If such an $\epsilon$ does not exist, we write $d_{H-G}(A,B)=\infty$. 
We say that a sequence of metric spaces $\{(A_n,d_{A_n})\}_n$ converges to $(A,d_A)$ in the Hausdorff-Gromov topology if $d_{\text{H-G}}(A_n,A)\rightarrow0$, as $n\rightarrow\infty$. 
An example of this kind of convergence is given by the following proposition.
\begin{proposition}\label{prop:2}
Let $G/K$ be a compact and connected homogeneous space. 
Suppose that there exists an intermediate Lie group $H$, with $G>H>K$. 
Let $\mathfrak g$, $\mathfrak h$ and $\mathfrak k$ be the Lie algebras of $G$, $H$ and $K$, respectively. 
Suppose that $\mathfrak h$ is $\Ad_{|_K}$-invariant and that every $G$-invariant Riemannian metric on $G/K$ a submersion metric
\begin{equation*}
H/K\rightarrow G/K\rightarrow G/H.
\end{equation*}
Then, if the fibre $H/K$ shrinks to a point, $G/K$ converges in the Hausdorff-Gromov sense to $G/H$.
\end{proposition}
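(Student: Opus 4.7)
The plan is to exhibit an explicit $\epsilon$-approximation between $G/K$ and $G/H$ using the submersion $\pi\colon G/K\to G/H$, and to show that $\epsilon$ can be taken to be (essentially) the diameter of the fibre, which tends to $0$ by hypothesis. Throughout, all fibres of $\pi$ are isometric to $(H/K,g_{H/K})$, where $g_{H/K}$ is the induced metric, because $G$ acts transitively on $G/K$ by isometries permuting the fibres; in particular $\mathrm{diam}(\pi^{-1}(xH))=\mathrm{diam}(H/K,g_{H/K})$ for every $xH\in G/H$.

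The candidate $\epsilon$-approximation is the graph
\begin{equation*}
S=\{(p,\pi(p))\mid p\in G/K\}\subseteq (G/K)\times(G/H).
\end{equation*}
Surjectivity of both projections is immediate ($\pi_1$ is tautological, $\pi_2$ holds because $\pi$ is onto). Condition (ii) in the definition of an $\epsilon$-approximation then reduces to bounding, uniformly in $p,q\in G/K$, the quantity
\begin{equation*}
\bigl|d_{G/K}(p,q)-d_{G/H}(\pi(p),\pi(q))\bigr|.
\end{equation*}

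For the lower bound, $d_{G/H}(\pi(p),\pi(q))\le d_{G/K}(p,q)$ because $\pi$ is a Riemannian submersion and thus $1$-Lipschitz: projecting any curve in $G/K$ joining $p$ to $q$ yields a curve of no greater length in $G/H$. For the upper bound, fix a minimising geodesic in $G/H$ from $\pi(p)$ to $\pi(q)$ and lift it horizontally starting at $p$; since $G/K$ is compact and the submersion has compact fibres the horizontal lift exists on the whole parameter interval and has the same length as the base geodesic. Its endpoint $q'$ lies in the fibre $\pi^{-1}(\pi(q))$, so by the triangle inequality
\begin{equation*}
d_{G/K}(p,q)\le d_{G/K}(p,q')+d_{G/K}(q',q)\le d_{G/H}(\pi(p),\pi(q))+\mathrm{diam}(H/K).
\end{equation*}
Combining the two bounds gives $|d_{G/K}(p,q)-d_{G/H}(\pi(p),\pi(q))|\le \mathrm{diam}(H/K)$, so $S$ is an $\epsilon$-approximation for every $\epsilon>\mathrm{diam}(H/K)$. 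Since by assumption the fibre $H/K$ shrinks to a point along the family of metrics considered, $\mathrm{diam}(H/K)\to 0$, which yields $d_{\mathrm{H\text{-}G}}(G/K,G/H)\to 0$.

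The only genuinely delicate step is the horizontal-lift argument used to get the upper bound: one must know that every rectifiable curve in $G/H$ admits a horizontal lift of the same length, and that the fibre distance between any two points of a single fibre is at most $\mathrm{diam}(H/K,g_{H/K})$. The first point follows from compactness of $G/K$ (so horizontal lifts do not run off in finite time) together with the standard fact that Riemannian submersions preserve lengths of horizontal curves; the second is just the $G$-equivariance of $\pi$ together with the fact that the induced intrinsic metric on each fibre dominates the restriction of the ambient distance, so bounding the intrinsic diameter suffices. Once these are in place the rest of the argument is formal.
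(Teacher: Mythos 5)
Your proof is correct and follows essentially the same route as the paper: both take the graph $S=\{(gK,gH)\}$ of the submersion as the $\epsilon$-approximation and exploit the fact that the submersion is an isometry on horizontal directions while the fibre directions collapse. Your version is in fact more quantitative than the paper's, which only asserts the distance discrepancy tends to zero, whereas you pin it down explicitly as being at most $\mathrm{diam}(H/K)$ via the $1$-Lipschitz bound and the horizontal-lift estimate.
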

\begin{proof}
Let $S\subseteq G/K\times G/H$ be given by
\begin{equation*}
\{(gK,gH),g\in G\}.
\end{equation*}
Clearly, $S$ projects onto both $G/K$ and $G/H$. 
Now, we can decompose $\mathfrak g$ into two different ways:
\begin{equation*}
\mathfrak k\oplus\mathfrak p=\mathfrak g=\mathfrak h\oplus\mathfrak q,
\end{equation*}
where $\mathfrak q$ and $\mathfrak p$ are orthogonal complements of $\mathfrak h$ and $\mathfrak k$ in $\mathfrak g$, respectively. 
Because of our assumption that every $G$-invariant Riemannian metric on $G/K$ is a submersion metric, the tangent space at each point of $G/K$ splits into vertical and horizontal subspaces:
\begin{equation*}
T_{gK}(G/K)=\mathcal V_{gK}\oplus \mathcal H_{gK},
\end{equation*}
where the vertical space $\mathcal V_{gK}\simeq\mathfrak q$ is the tangent subspace to the fibre $H/K$ and the horizontal space $\mathcal H_{gK}\simeq\mathfrak q^\perp$ is its orthogonal complement in $T_{gK}(G/K)\simeq\mathfrak p$.
 Moreover, the submersion map $gK\mapsto gH$ defines an isometry between $\mathcal H_{gK}$ and $T_{gH}(G/H)$ and distances in the $\mathfrak q$-direction shrink. 
Hence, given $(gK,gH),(hK,hH)\in S$,
\begin{equation*}
|d_{G/K}(gK,hK)-d_{G/H}(gH,hH)|\rightarrow0,
\end{equation*}
which proves convergence of $G/K$ to $G/H$ in the Hausdorff-Gromov sense.
\end{proof}
For further details on these matters we refer the reader to \cite{Burago-Burago-Ivanov:1}.

\section{Isotropy irreducible spaces}\label{sec:3}
Let $G/K$ be a compact and connected homogeneous space such that it is effective. 
Suppose that $G/K$ is isotropy irreducible.
By Schur's lemma, isotropy irreducible spaces carry a unique (up to rescaling) invariant Riemannian metric, which must also be Einstein (with positive scalar curvature). 
So, the HRF is just a rescaling of the initial metric. 
We can deduce the behaviour of the HRF directly from the flow equation as this will be instructive for more complicated examples. 
In the case where $G/K$ is isotropy irreducible, the system \eqref{eqn:6} becomes as follows. 
Let $d=\dim(\mathfrak p)$ and choose the $\Ad_{|_K}$-invariant background metric $Q$ to be $-\frac{1}{b}B$, where $B$ is the Killing form of $\mathfrak g$ and $b>0$. 
We then have that the only nonzero structure constant is given by $[111]$.
Then, $g(t)=x(t)Q_{|_{\mathfrak p}}$ is a solution to the Ricci flow equation if $x(t)$ satisfies
\begin{equation*}
\dot{x}(t)=-\left(b-\frac{[111]}{2d}\right)=-C,
\end{equation*}
where  $C>0$, by \eqref{eqn:5}. 
Hence,
\begin{equation*}
x(t)=C(T-t),
\end{equation*}
where $T=\frac{x(0)}{C}$ is the singular time of the HRF. 
As $t\rightarrow T$, $G/K$ shrinks to a point, which is a type I singularity. 
We also have that the solution exists for all $t<0$ and, as $t\rightarrow-\infty$, $G/K$ expands homothetically. 
This ancient solution is of type I. 
\begin{example}
Consider the $n$-dimensional sphere $(S^n,g_{_{S^n}})$ with constant sectional curvature $+1$. 
The metric $g_{_{S^n}}$ is Einstein with Einstein constant given by $n-1$. 
Hence, the solution to the Ricci flow equation with initial metric $g_{_{S^n}}$ is given by
\begin{equation*}
g(t)=(1-2(n-1)t)g_{_{S^n}}.
\end{equation*}
\end{example}

\section{When the isotropy group is not maximal}\label{sec:5}
In this section, we are going to consider the following class of compact and connected homogeneous spaces.
Let $G/K$ be a compact and connected homogeneous space which is also effective. 
Let $\mathfrak p$ be the isotropy representation of $K$ and suppose that it splits into two inequivalent irreducible $\Ad_{|_K}$-invariant summands
\begin{equation*}
\mathfrak p=\mathfrak p_1\oplus\mathfrak p_2.
\end{equation*}
Suppose that there exists an intermediate Lie group $H$, with $K<H<G$, and Lie algebra given by
\begin{equation*}
\mathfrak h=\mathfrak k\oplus\mathfrak p_.
\end{equation*}
In particular, $H/K$ is isotropy irreducible and every $G$-invariant Riemannian metric on $G/K$ is given by a fixed Riemannian submersion
\begin{equation}\label{eqn:23}
H/K\rightarrow G/K\rightarrow G/H,
\end{equation}
by rescaling the metric on the fibre and on the base.
Choose a background metric $Q$ such that it is a negative multiple of the Killing form on both $\mathfrak p_1$ and $\mathfrak p_2$. 
Then,
\begin{equation*}
[112]=0
\end{equation*}
and the Ricci flow equation for the one-parameter family of homogeneous Riemannian metrics
\begin{equation*}
g(t)=x_1(t)Q_{|_{\mathfrak p_1}}\oplus x_2(t)Q_{|_{\mathfrak p_2}}
\end{equation*}
is given by the following system of nonlinear ODEs:
\begin{align}
&\dot{x}_1(t)=-\bigg(b_1-\frac{[111]}{2d_1}-\frac{[122]}{d_1}\bigg)-\frac{[122]}{2d_1}\frac{x_1(t)^2}{x_2(t)^2},\label{eqn:18a}\\
&\dot{x}_2(t)=-\bigg(b_2-\frac{[222]}{2d_2}\bigg)+\frac{[122]}{d_2}\frac{x_1(t)}{x_2(t)},\label{eqn:19a}
\end{align}
together with the condition that $x_1(t),x_2(t)>0$.
\begin{remarks}
\smallskip\noindent \;

\smallskip\noindent 1.\; We note that, if $b_1=b_2$, $d_1=d_2$ and $[111]=[222]$, the above system of equations coincides with the system (7.2)-(7.3) of \cite{Bakas-Kong-Ni:1}. This is, for example, the case of $G$ simple and both $G/H$ and $H/K$ symmetric spaces such that $d_1=d_2$.

\smallskip\noindent 2.\; Observe that, if $[122]=0$, the system \eqref{eqn:18a}-\eqref{eqn:19a} reduces to
\begin{align*}
&\dot{x}_1(t)=-\bigg(b_1-\frac{[111]}{2d_1}\bigg),\\
&\dot{x}_2(t)=-\bigg(b_2-\frac{[222]}{2d_2}\bigg).
\end{align*}
This is the situation in which the universal cover of $G/K$ is given by a product of isotropy irreducible homogeneous spaces (cf. \cite[Theorem 2.1]{Wang-Ziller:1}). 
In this case, the analysis is just the same as the one performed in section \ref{sec:3} on each factor.
\end{remarks}
Motivated by the above remark, we will assume that $[122]>0$. 
 Now, let
\begin{align*}
&A=\frac{[122]}{2d_1},\\
&B=\frac{[122]}{d_2},\\
&C=b_1-\frac{[111]}{2d_1}-\frac{[122]}{d_1}=2c_1+\frac{[111]}{2d_1},\\
&D=b_2-\frac{[222]}{2d_2}=2c_2+\frac{[222]}{2d_2}+\frac{2}{d_2}[122].
\end{align*}
$A$, $B$ and $D$ are all strictly positive, because $[122],d_1,d_2>0$ and by \eqref{eqn:5}. 
Moreover, \eqref{eqn:5} also implies that $C\geq0$. 
Note that $C=0$ if and only if $c_1=0$ and $[111]=0$. This is the case of $\mathfrak p_1$ being a trivial 1-dimensional summand in the decomposition of $\mathfrak p$.
Using these quantities, the system \eqref{eqn:18a}-\eqref{eqn:19a} can be written as
\begin{align}
&\dot{x}_1(t)=-C-A\frac{x_1(t)^2}{x_2(t)^2},\label{eqn:18}\\
&\dot{x}_2(t)=-D+B\frac{x_1(t)}{x_2(t)}.\label{eqn:19}
\end{align}
It is useful to consider the following quantity. Let
\begin{equation*}
y(t)=\frac{x_1(t)}{x_2(t)}.
\end{equation*}
Note that \eqref{eqn:18}-\eqref{eqn:19} can be written as
\begin{align}
&\dot{x}_1(t)=-C-Ay(t)^2,\label{eqn:17}\\
&\dot{x}_2(t)=-D+By(t).\label{eqn:25}
\end{align}
Under the Ricci flow \eqref{eqn:18}-\eqref{eqn:19}, $y(t)$ evolves as follows:
\begin{equation}\label{eqn:20}
\dot{y}(t)=\frac{1}{x_2(t)}\big(-C+Dy(t)-(A+B)y(t)^2\big).
\end{equation}
Note that the equation
\begin{equation}\label{eqn:21}
C-Dy(t)+(A+B)y(t)^2=0
\end{equation}
has solutions if and only if the following inequality is satisfied:
\begin{equation*}
\bigg(b_2-\frac{[222]}{2d_2}\bigg)^2-4[122]\bigg(\frac{1}{2d_1}+\frac{1}{d_2}\bigg)\bigg(b_1-\frac{[111]}{2d_1}-\frac{[122]}{d_1}\bigg)\geq0.
\end{equation*}
By \cite{Wang-Ziller:1}, we know that the above inequality is satisfied if and only if $G/K$ admits a $G$-invariant Einstein metric. 
More precisely, $(x_1(t),x_2(t))$ defines a homogeneous Einstein metric on $G/K$ if and only if $\frac{x_1(t)}{x_2(t)}$ is a (positive) root of \eqref{eqn:21}.
Observe that the roots of \eqref{eqn:21} are always positive because the positivity of $A$, $B$, $C$ and $D$ implies that the sum and the product of the roots are positive.
In particular, $G/K$ carries at most two distinct $G$-invariant Einstein metrics, up to scaling. 
\begin{remark}\label{remark:1}
If $y(0)=y_i$, then we will get the trivial solution corresponding to $y(t)=y_i$, for all $i=1,2$.
We then have that $y_i$, with $i=1,2$, are fixed by the HRF.
Moreover, as we already mentioned in the introduction, $x_i(t)$, with $i=1,2$, just scale homothetically for all $t$ such that a solution exists.
\end{remark}
The following lemma holds.
\begin{lemma}\label{lemma:5}
$y(t)$ is monotonically increasing or decreasing under the HRF.
\end{lemma}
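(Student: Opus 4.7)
The plan is to read off the sign of $\dot{y}(t)$ directly from \eqref{eqn:20}. Since $x_2(t)>0$ throughout the lifespan of the flow, $\dot{y}(t)$ has the same sign as the quadratic
\[
P(y)\;:=\;-C+Dy-(A+B)y^{2},
\]
so the whole statement reduces to showing that $P(y(t))$ keeps a constant sign along every trajectory.

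First I would dispose of the case in which $P$ has no real root. In that case $C>0$ (otherwise $y=0$ would already be a root), so $P(0)=-C<0$, and since $P$ is a downward parabola with no zeros it is strictly negative everywhere. Hence $\dot{y}<0$ along the trajectory and $y$ is strictly decreasing.

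Otherwise $P$ has one or two real roots, and by \eqref{eqn:21} the positive ones correspond precisely to the invariant Einstein metrics on $G/K$. Rewriting \eqref{eqn:18}--\eqref{eqn:19} in the coordinates $(x_2,y)$ gives the autonomous system
\[
\dot{x}_2=-D+By,\qquad \dot{y}=\frac{P(y)}{x_2},
\]
which is locally Lipschitz on $\{x_2>0\}$. By Remark~\ref{remark:1}, every line $\{y=y_i\}$, with $y_i$ a positive root of $P$, is invariant under this flow. Uniqueness of solutions to ODEs then forces a trajectory starting with $y(0)$ different from all such $y_i$ to stay in the open region of $(0,\infty)$ between (or beyond) the Einstein levels where it started. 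On such a region $P$ has a definite sign, so $\dot{y}$ does too, and $y(t)$ is strictly monotone. Trajectories starting on an Einstein level give the trivial constant solution, which is trivially monotone.

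The only delicate point is the last uniqueness step: a priori one could worry about a trajectory approaching and crossing an Einstein level $y=y_i$, thereby reversing the sign of $\dot{y}$. Once the ODE has been recast as an autonomous Lipschitz system on $\{x_2>0\}$, however, invariance of the Einstein levels combined with the standard uniqueness theorem rules this out, and the lemma follows.
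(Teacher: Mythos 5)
Your proof is correct and follows essentially the same route as the paper's: a sign analysis of the quadratic $-C+Dy-(A+B)y^2$ on the intervals cut out by its roots, combined with uniqueness of solutions to show that the Einstein levels $y=y_i$ cannot be crossed, so $\dot y$ keeps a fixed sign along each trajectory. The only cosmetic difference is that you make the uniqueness step explicit by passing to the autonomous Lipschitz system in the $(x_2,y)$ variables, whereas the paper invokes uniqueness directly for the $(x_1,x_2)$ system together with Remark~\ref{remark:1}.
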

\begin{proof}
According to the number of roots of \eqref{eqn:21}, we have that only three possibilities can occur:
\begin{itemize}
\item Equation \eqref{eqn:21} has no solutions and 
\begin{equation*}
-C+Dy(t)-(A+B)y(t)^2
\end{equation*}
is always negative;
\item Equation \eqref{eqn:21} has a unique solution $\bar{y}$ such that 
\begin{equation*}
-C+Dy(t)-(A+B)y(t)^2=-(A+B)(y(t)-\bar{y})^2;
\end{equation*}
\item Equation \eqref{eqn:21} has two distinct solutions $y_1$ and $y_2$ such that 
\begin{equation*}
-C+Dy(t)-(A+B)y(t)^2=-(A+B)(y(t)-y_1)(y(t)-y_2).
\end{equation*}
\end{itemize}
By \eqref{eqn:20}, we have that, when \eqref{eqn:21} has no roots or only one root, $y(t)$ is monotonically decreasing for all $t$ such that a solution to the HRF exists. 
Now suppose that \eqref{eqn:21} has two distinct roots $y_1$ and $y_2$. 
Suppose without loss of generality that $y_1<y_2$. 
Then, when $y(t)<y_1$ and $y(t)>y_2$, $y(t)$ is monotonically decreasing in $t$. 
On the other hand, when $y_1<y(t)<y_2$, $y(t)$ is monotonically increasing in $t$.
Because of the uniqueness of the solution and of remark \ref{remark:1}, if $y(0)\neq y_i$, with $i=1,2$, then $y(t)\neq y_i$ for all $t$ such that a solution to the HRF exists. 
Hence, $y(t)$ is monotonic along every solution to the HRF.
\end{proof}

We now have to distinguish between two cases: $C>0$ and $C=0$. 
We will begin by considering the situation in which $C>0$ and then, at the end of the section, we will study the case $C=0$.
Let $G/K$ be as above and such that $C>0$. 
Then the following theorem holds.
\begin{theorem}\label{theo:7}
There exists $T<\infty$ such that there exists a unique solution to the HRF on $G/K$ which is defined on the maximal time interval $[0,T)$. Moreover, $T$ is a type I singularity and, as $t\rightarrow T$, one of the following singular behaviours occurs:
\begin{enumerate}
\item The whole space shrinks to a point in finite time.
\item The fibre $H/K$ in \eqref{eqn:23} shrinks to a point in finite time and the total space $G/K$ converges in the Hausdorff-Gromov topology to $G/H$.
\end{enumerate}
Moreover, a necessary condition for i) to happen is that $G/K$ carries $G$-invariant Einstein metrics.
\end{theorem}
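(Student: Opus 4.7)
The plan is to derive everything from the system \eqref{eqn:18}-\eqref{eqn:19} by combining the monotonicity of $y(t)=x_1(t)/x_2(t)$ from Lemma \ref{lemma:5} with the sign of the polynomial $Q(y)=-(A+B)y^2+Dy-C$ that governs $\dot y$. Local existence and uniqueness on the open quadrant $\{x_1,x_2>0\}$ follow from Picard-Lindel\"of applied to the smooth right-hand side of \eqref{eqn:18}-\eqref{eqn:19}. The a priori bound $\dot x_1=-C-Ay^2\le -C<0$ forces $x_1(t)\le x_1(0)-Ct$, so the maximal existence time satisfies $T\le x_1(0)/C<\infty$, proving finiteness of $T$.

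I would then split the analysis according to whether $G/K$ admits a $G$-invariant Einstein metric, equivalently whether $Q$ has positive real roots. If none exists, the discriminant is negative and $Q(y)\le -\delta$ uniformly for some $\delta>0$. The core step is to show that $x_2$ stays bounded below by a positive constant on $[0,T)$. I would argue by contradiction: if $x_2(t)\to 0$ as $t\to T$, then $\dot x_2\ge -D$ gives $x_2(t)\le D(T-t)$, whence $\int_0^T dt/x_2(t)=\infty$; integrating $\dot y=Q(y)/x_2\le -\delta/x_2$ then forces $y(t)\to -\infty$, contradicting $y>0$. Hence $x_2$ is bounded below, only $x_1$ collapses, the fibre $H/K$ shrinks, and case (ii) holds, with $G/K\to G/H$ in the Hausdorff-Gromov topology by Proposition \ref{prop:2}.

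When Einstein metrics do exist, let $0<y_1\le y_2$ be the positive roots of $Q$ and split according to the position of $y(0)$ relative to $y_1$. For $y(0)<y_1$, Lemma \ref{lemma:5} keeps $y(t)$ below $y_1$ and to the left of the vertex of $Q$, so $Q(y(t))\le Q(y(0))<0$, and the contradiction argument from the no-Einstein case applies verbatim to give case (ii). For $y(0)\ge y_1$, monotonicity traps $y(t)$ in a bounded interval of the form $[\min(y_1,y(0)),\max(y_2,y(0))]$, so $y$ stays bounded away from zero; since $x_1\to 0$ and $x_2=x_1/y\le x_1/y_1$, we get $x_2\to 0$ as well, which is case (i). This dichotomy shows that case (i) forces $G/K$ to carry a $G$-invariant Einstein metric, proving the necessary condition.

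For the type I claim, in every regime $y$ is bounded, so $\dot x_1$ and $\dot x_2$ are bounded and $x_1(t)$ (and $x_2(t)$ in case (i)) decays at most linearly in $T-t$. Plugging the resulting linear lower bounds for $x_1,x_2$ into the Ricci endomorphism formula recalled in Section \ref{sec:2} yields $|{\Rm}(g(t))|_{g(t)}=O(1/(T-t))$ as $t\to T$, which is the required type I rate. The step I expect to be the most delicate is the lower bound on $x_2$ in the non-Einstein regime and in its $y(0)<y_1$ analogue: the argument rests on combining the uniform negativity of $Q$ with the non-integrability of $1/x_2$ near a hypothetical collapse time, and this is where the quadratic structure of the problem is genuinely exploited.
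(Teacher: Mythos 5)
Your proposal is correct, but it reaches the dichotomy and the necessary condition by a genuinely different mechanism than the paper. The paper first shows, via the monotonicity of $y$, that both $x_1(t)$ and $x_2(t)$ have finite limits as $t\to T$, rules out ``$x_2\to0$ while $x_1\not\to0$'' by observing that \eqref{eqn:19} would then force $\dot x_2\to+\infty$, and only afterwards extracts the Einstein condition by substituting the ansatz $x_i(t)=k_i(T-t)^{n_i}+o((T-t)^{n_i})$ into \eqref{eqn:18}--\eqref{eqn:19} to get $n_1=n_2=1$ and the Einstein equations for $(k_1,k_2)$. You instead exploit the sign of $Q(y)=-(A+B)y^2+Dy-C$: when $Q\le-\delta<0$ along the solution (no Einstein metric, or $y(0)<y_1$), the bound $\dot x_2\ge -D$ makes $\int^T dt/x_2$ diverge if $x_2$ collapsed, and integrating $\dot y\le-\delta/x_2$ would drive $y$ to $-\infty$ --- a clean contradiction that simultaneously yields case (ii) and the necessary condition for case (i), without any a priori asymptotic expansion. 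This is arguably tighter than the paper's integer-power ansatz, which is its least justified step; what you lose is the extra information the ansatz delivers (the identification of the limit $y(t)\to y_2$ and of the coefficients $k_i$ as an Einstein metric), which the paper reuses in Theorems \ref{theo:4}--\ref{theo:3} and in the blow-up analysis of section \ref{sec:4}. Your type I argument via two-sided linear bounds coming from boundedness of $y$ matches the paper's in substance.

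Two small points you should make explicit. First, type I requires not only $(T-t)\sup|\Rm|<\infty$ but also $\sup|\Rm|\to\infty$; this is immediate from $\Ric(g(t))_{|_{\mathfrak p_1}}=-\tfrac12\dot x_1\,Q_{|_{\mathfrak p_1}}\ge \tfrac{C}{2}Q_{|_{\mathfrak p_1}}$, so the Ricci eigenvalue on $\mathfrak p_1$ is at least $C/(2x_1)\to\infty$, but it is not contained in your $O(1/(T-t))$ upper bound. Second, the assertions ``only $x_1$ collapses'' and ``$x_1\to0$'' at $t=T$ rest on the standard fact that a maximal solution must leave every compact subset of the domain; since $y$ is bounded you have $x_1\le x_1(0)$ and $x_2$ bounded above, so the only available degeneration is $\liminf x_1=0$ or $\liminf x_2=0$, and monotonicity of $x_1$ then upgrades this to a genuine limit. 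Both gaps are at the level of informality the paper itself allows, but they are worth a sentence each.
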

\begin{proof}
Let
\begin{align*}
&f_1(x_1,x_2)=-C-A\frac{x_1^2}{x_2^2},\\
&f_2(x_1,x_2)=-D+B\frac{x_1}{x_2},
\end{align*}
be the functions defined by the right-hand side of \eqref{eqn:18}-\eqref{eqn:19}. Since $A$ and $B$ are strictly positive, $f_1$, $f_2$ and their derivatives with respect to $x_1$ and $x_2$ are continuous if and only if $(x_1,x_2)$ belongs to
\begin{equation*}
\mathbf D=\{(x_1,x_2)\in\mathbb R^2|x_2\neq 0\}.
\end{equation*}
We can then apply a standard theorem of ODEs, see for example \cite[Theorem 1.1]{Brauer-Nohel:1}, which says that, given any initial condition in $((x_1)_0,(x_2)_0)\in\mathbf D$, there exists a unique solution $(x_1(t),x_2(t))$ to \eqref{eqn:18}-\eqref{eqn:19} such that $x_1(t_0)=(x_1)_0,x_2(t_0)=(x_2)_0$ and which depends continuously on $t$ and the initial data. 
Moreover, the solution $(x_1(t),x_2(t))$ exists on any interval $I$ containing $t_0$ and such that $(x_1(t),x_2(t))\in\mathbf D$, for every $t\in I$.

From \eqref{eqn:18}, we have that $x_1(t)$ is decreasing in $t$ and
\begin{equation*}
\dot{x}_1(t)\leq-C,
\end{equation*} 
which integrated gives
\begin{equation*}
x_1(t)\leq-Ct+x_1(0).
\end{equation*}
Hence, there exists $T\leq\frac{x_1(0)}{C}<\infty$ such that the unique solution to the Ricci flow equation will be defined on the maximal time interval $[0,T)$, otherwise $x_1(t)$ becomes negative. 
In particular, $x_1(t)$ will approach a finite limit, as $t\rightarrow T$.

Now, lemma \ref{lemma:5} enables us to conclude that $x_2(t)$ approaches a limit, as $t\rightarrow T$. 
In fact, as $\dot{x}_2(t)$ is given by
\begin{equation*}
\dot{x}_2(t)=-D+By(t)
\end{equation*}
and $y(t)$ is monotone, so it approaches a limit in $[0,\infty)$, there exists $\bar{t}\leq T$ such that $x_2(t)$ is monotonically decreasing or increasing for all $t>\bar{t}$. 
Hence, it approaches a limit, as $t\rightarrow T$.
We also have that this limit cannot be $+\infty$. 
In fact, from \eqref{eqn:19} this would imply that $\dot{x}_2(t)\rightarrow-D$, which is a negative value. 
By the mean value theorem, this is a contradiction.
Hence, $x_2(t)$ has a finite limit, as $t\rightarrow T$.
Similarly, if $\dot{x}_2(t)\rightarrow+\infty$, $x_2(t)$ cannot tend to zero through positive values.
From \eqref{eqn:19}, we see that, if $x_2(t)\rightarrow0$ and the limit of $x_1(t)$ is nonzero, then $\dot{x}_2(t)\rightarrow+\infty$, which is a contradiction.
We can then conclude that, as $t\rightarrow T$, $y(t)$ approaches a finite limit and there are only two possible singular behaviours:
\begin{enumerate}
\item Both $x_1(t)$ and $x_2(t)$ tend to zero.\label{case:1c}
\item $x_1(t)$ tends to zero and $x_2(t)$ has a finite limit, which is strictly positive.\label{case:2c}
\end{enumerate}
We will now analyse these two singular behaviours separately.

Let us begin with case \ref{case:1c}). 
The singular time $T$ is characterised by the shrinking of the whole space to a point, as both $x_1(t)$ and $x_2(t)$ tend to zero, as $t\rightarrow T$. 
Then, there exist two positive integers $n_1$ and $n_2$ such that
\begin{equation}\label{eqn:2}
x_i(t)=k_i(T-t)^{n_i}+o((T-t)^{n_i}),\quad i=1,2,
\end{equation}
where $k_1$ and $k_2$ are positive coefficients. 
First of all, we observe that $x_1(t)/x_2(t)$ being bounded for all $t\in[0,T)$ implies
\begin{equation*}
n_1\geq n_2>0.
\end{equation*}
Then, by substituting \eqref{eqn:2} into \eqref{eqn:18}-\eqref{eqn:19}, we obtain that $n_1=n_2=1$, which means that $x_1(t)$ and $x_2(t)$ tend to zero linearly in $t$. 
Moreover, we also have that $k_1$ and $k_2$ have to satisfy the following system of nonlinear equations:
\begin{align*}
\frac{C}{k_1}+A\frac{k_1}{k_2^2}&=1,\\
\frac{D}{k_2}-B\frac{k_1}{k_2^2}&=1.
\end{align*}
The above system is satisfied if and only if
\begin{equation*}
\tilde{g}=k_1Q_{|_{\mathfrak p_1}}\oplus k_2Q_{|_{\mathfrak p_2}}
\end{equation*}
defines a $G$-invariant Einstein metric on $G/K$.
Finally, $T$ is a type I singularity, because $|\Rm(g(t))|_{g(t)}$ is asymptotically given by a rescaling of $|\Rm(g(0))|_{g(0)}$ by $(T-t)^{-1}$ times a positive constant on each summand.

We can now consider case \ref{case:2c}). Here, the singular time $T$ is characterised by the fact that $x_1(t)$ becomes zero, as $t\rightarrow T$. 
The Ricci flow then has to stop because the metric has collapsed on $\mathfrak p_1$. 
Then, there exist a positive integer $n$ such that
\begin{align*}
x_1(t)&=k_1(T-t)^{n}+o((T-t)^{n}),\\
x_2(t)&=k_2+o(1),
\end{align*}
where $k_1$ and $k_2$ are two positive constants. 
Substituting these expressions into \eqref{eqn:18}-\eqref{eqn:19}, we get that $n=1$, which means that $x_1(t)$ tends to zero linearly in $t$.
\begin{claim}\label{claim:3}
$T$ is a type I singularity.
\end{claim}
\begin{proof}[Proof of the claim]
We note that, asymptotically, on $\mathfrak p_1$, the Ricci flow is simply given by a rescaling of the initial metric by a constant times $(T-t)$, while on $\mathfrak p_2$ we only have a rescaling by a positive constant of the initial metric.
We then observe that the squared norm of the curvature tensor respects the splitting of the metric $g(t)$. Hence, asymptotically, $|\Rm(g(t))|_{g(t)}^2$ behaves like $(T-t)^{-2}$. 
This implies that $|\Rm(g(t))|_{g(t)}$ blows up to $+\infty$ like $(T-t)^{-1}$, when $t\rightarrow T$. This implies that $T$ is a type I singularity for the Ricci flow.
\end{proof}
Finally, by proposition \ref{prop:2}, we have that, as $t\rightarrow T$, $G/K$ converges in the Hausdorff-Gromov sense to $G/H$.
This concludes the proof of the theorem.
\end{proof}
We now want to investigate the existence of ancient solutions on $G/K$ as above and with $C>0$ and associate to each singular behaviour the corresponding subset of initial conditions. 
In order to do this, we have to distinguish between three different cases:
\begin{enumerate}
\renewcommand{\theenumi}{(\alph{enumi}}
\item $G/K$ carries two $G$-invariant Einstein metrics, up to scaling;
\item $G/K$ carries one $G$-invariant Einstein metric, up to scaling;
\item $G/K$ does not carry any $G$-invariant Einstein metric.
\newcounter{enumi_non_max}
\setcounter{enumi_non_max}{\value{enumi}}
\end{enumerate}
\begin{remark}
Note that in the case where $G/K$ does not admit $G$-invariant Einstein metrics, $C$ is always strictly positive.
\end{remark}
\subsection{Case (a)}
Let us first consider the case in which $G/K$ admits two non isometric $G$-invariant Einstein metrics, up to scaling. 
Let $y_1>0$ and $y_2>0$ correspond to the these two $G$-invariant Einstein metrics. 
In particular, $y_1$ and $y_2$ are solutions to the following equation:
\begin{equation}\label{eqn:26}
C-Dy+(A+B)y^2=0.
\end{equation}
Suppose without loss of generality that $y_2>y_1$. We then have that
\begin{equation*}
\begin{aligned}
\frac{\dot{x}_2(t)}{x_2(t)}&=\dot{y}(t)\frac{D-By(t)}{(A+B)(y(t)-y_1)(y(t)-y_2)}\\
&=\frac{\dot{y}(t)}{A+B}\bigg(-\frac{1}{y(t)-y_1}\bigg(B+\frac{D-By_2}{y_2-y_1}\bigg)+\frac{1}{y(t)-y_2}\frac{D-By_2}{y_2-y_1}\bigg).
\end{aligned}
\end{equation*}
We can now integrate this expression and obtain:
\begin{equation*}
\Lambda x_2(t)=|y(t)-y_1|^{-\frac{1}{A+B}\Big(B+\frac{D-By_2}{y_2-y_1}\Big)}|y_2-y(t)|^{\frac{1}{A+B}\frac{D-By_2}{y_2-y_1}},
\end{equation*}
where $\Lambda$ is a non negative constant. We then have a first integral for the Ricci flow \eqref{eqn:18}-\eqref{eqn:19}. This first integral is given by
\begin{equation}\label{eqn:24}
\Lambda=\frac{1}{x_2(t)}\left|\frac{x_1(t)}{x_2(t)}-y_1\right|^{-\frac{1}{A+B}\Big(B+\frac{D-By_2}{y_2-y_1}\Big)}\left|y_2-\frac{x_1(t)}{x_2(t)}\right|^{\frac{1}{A+B}\frac{D-By_2}{y_2-y_1}}.
\end{equation}
We will now consider three different possible initial conditions:
\begin{enumerate}
\renewcommand{\theenumi}{(a)(\arabic{enumi}}
\item $y(0)<y_1$,
\item $y_1<y(0)<y_2$,
\item $y(0)>y_2$,
\end{enumerate}
where $y(0)=\frac{x_1(0)}{x_2(0)}$.
We have the following proposition:
\begin{proposition}\label{prop:3}
The initial conditions \textup{(a)(1)}, \textup{(a)(2)} and \textup{(a)(3)} above are preserved under the HRF.
\end{proposition}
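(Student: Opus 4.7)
The plan is to exploit uniqueness of solutions to \eqref{eqn:18}-\eqref{eqn:19} together with the invariance of the two Einstein rays $\{x_1/x_2 = y_i\}$, $i=1,2$, under the HRF. By Remark \ref{remark:1}, the ansatz $x_1(t)/x_2(t)\equiv y_i$ does give a genuine solution of the system (the corresponding Einstein metric simply rescales homothetically), and this solution exists on a neighbourhood of any point of the ray that lies in the region $\mathbf D$ used in the proof of Theorem \ref{theo:7}.

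First I would note that along the maximal HRF solution $(x_1(t),x_2(t))$, the function $y(t)=x_1(t)/x_2(t)$ is continuous on $[0,T)$, since $x_2(t)>0$ throughout and the $x_i$ are smooth. Therefore, if the trajectory starting in one of the open strips (a)(1), (a)(2), (a)(3) were to leave that strip, it would have to attain one of the values $y_1$ or $y_2$ at some first time $t^*\in(0,T)$.

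Next I would argue by contradiction, e.g.\ in case (a)(1): assume $y(0)<y_1$ and $y(t^*)=y_1$ for some smallest such $t^*>0$. Consider the Einstein trajectory $(\tilde x_1(t),\tilde x_2(t))$ of the system \eqref{eqn:18}-\eqref{eqn:19} satisfying $\tilde x_1(t^*)=x_1(t^*)$, $\tilde x_2(t^*)=x_2(t^*)$ and $\tilde x_1(t)/\tilde x_2(t)\equiv y_1$ on its interval of existence. The original solution $(x_1,x_2)$ and the Einstein solution $(\tilde x_1,\tilde x_2)$ both solve \eqref{eqn:18}-\eqref{eqn:19} with the same initial data at $t=t^*$. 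Since $x_2(t)>0$ on $[0,t^*]$, the whole segment $\{(x_1(t),x_2(t)):0\le t\le t^*\}$ lies in the open set $\mathbf D$, so the uniqueness statement of the ODE theorem invoked in the proof of Theorem \ref{theo:7} applies and forces $(x_1,x_2)\equiv(\tilde x_1,\tilde x_2)$ on $[0,t^*]$ after backward integration. In particular $y(0)=y_1$, contradicting $y(0)<y_1$. The remaining three cases (approaching $y_1$ from above, and approaching $y_2$ from either side) are treated by an identical argument. This shows that each of the open sets $\{y<y_1\}$, $\{y_1<y<y_2\}$ and $\{y>y_2\}$ is invariant under the HRF.

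There is no real obstacle in this argument; the only point needing attention is verifying that the Lipschitz/uniqueness hypothesis of the ODE theorem is available on the whole interval $[0,t^*]$, which is guaranteed once we know $x_2>0$ there, i.e.\ once we stay inside the domain $\mathbf D$ of Theorem \ref{theo:7}.
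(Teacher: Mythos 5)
Your argument is correct and is essentially the paper's own proof, just spelled out in more detail: the paper likewise deduces invariance of the three regions from the uniqueness of solutions to \eqref{eqn:18}--\eqref{eqn:19} together with the fact (Remark \ref{remark:1}) that the Einstein rays $y\equiv y_i$ are themselves solutions, so a trajectory with $y(0)\neq y_i$ can never reach $y_i$. The paper also notes the conserved quantity $\Lambda$ in \eqref{eqn:24} as an alternative route, but your uniqueness-by-contradiction argument matches its primary proof.
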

\begin{proof}
This is essentially the same argument as in the proof of lemma \ref{lemma:5}.
In fact, because of the uniqueness of the solution and remark \ref{remark:1}, if $y(0)\neq y_i$, then $y(t)\neq y_i$ for all $i=1,2$ and for all $t$ such that a solution to the HRF exists. 
\end{proof}
Note that this proposition also follows from the conserved quantity $\Lambda$.

We will now associate to each initial condition \textup{(a)(1)}, \textup{(a)(2)} and \textup{(a)(3)} the corresponding behaviour of the Ricci flow and, then, we will investigate the existence of ancient Ricci flows on $G/K$.

If the initial condition $y(0)$ lies between $y_1$ and $y_2$, then we can prove the following theorem, which is a generalisation of \cite[Theorem 7.1]{Bakas-Kong-Ni:1}.
\begin{theorem}\label{theo:4}
If $y_1<y(0)<y_2$, there exists a positive constant $T<\infty$ such that there exists a unique type I ancient solution to the Ricci flow \eqref{eqn:18}-\eqref{eqn:19} defined on $(-\infty,T)$. Moreover, $T$ is a type I singularity and, as $t\rightarrow T$, $G/K$ shrinks to a point. This solution flows the invariant Einstein metric corresponding to $y_2$ to the invariant Einstein metric corresponding to $y_1$, as $t$ goes from $T$ to $-\infty$.
\end{theorem}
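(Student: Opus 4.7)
The plan is to analyse the flow for any initial condition with $y_1<y(0)<y_2$, and then extend it backward in time. By Proposition \ref{prop:3} the ratio $y(t)$ is trapped in $(y_1,y_2)$ throughout the maximal interval. Since
\begin{equation*}
-C+Dy-(A+B)y^{2}=-(A+B)(y-y_1)(y-y_2)>0\quad\text{on }(y_1,y_2),
\end{equation*}
equation \eqref{eqn:20} forces $\dot y(t)>0$. Moreover $y_2<y_1+y_2=D/(A+B)<D/B$, so $\dot x_2<0$ on $(y_1,y_2)$ by \eqref{eqn:25}, and by \eqref{eqn:17} $\dot x_1<0$ too; hence both $x_1,x_2$ decrease in forward time.

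Forward, Theorem \ref{theo:7} gives a singularity at some finite $T$. The monotone bounded ratio satisfies $y(t)\to L_+\le y_2$; if $L_+<y_2$ then $x_2$ stays bounded above and the numerator in \eqref{eqn:20} stays bounded below by a positive constant, forcing $\dot y$ bounded below and contradicting $y\le L_+$. Hence $L_+=y_2$, and case (i) of Theorem \ref{theo:7} applies: $x_1(t),x_2(t)$ vanish linearly with ratio $y_2$, so $g(t)$ converges to the Einstein metric $\tilde g_{y_2}$.

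Backward extension is the heart of the argument. Since $y$ lies in the compact subinterval $[y(0),y_2)\subset(y_1,y_2)$ forward and similarly in $(y_1,y(0)]$ backward, the right-hand sides of \eqref{eqn:17}--\eqref{eqn:25} are uniformly bounded, so $x_i(t)\le x_i(0)+(\text{const})|t|$ as $t$ decreases; together with $x_i$ increasing backward (hence bounded below by $x_i(0)>0$), the ODE stays regular and the solution extends to $(-\infty,T)$. As $t\to-\infty$, $y(t)\to L_-\ge y_1$. Suppose $L_->y_1$: then $\dot x_2(t)\to BL_--D<0$, so $x_2(t)\sim(D-BL_-)|t|$, and \eqref{eqn:20} yields $\dot y(t)\sim\alpha/|t|$ for some $\alpha>0$. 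Integrating from $-M$ to $-1$ gives $y(-1)-y(-M)\sim\alpha\log M\to\infty$, contradicting the boundedness of $y$. Therefore $L_-=y_1$ and $x_2(t)\sim(D-By_1)|t|$; combined with $y(t)\to y_1$ this means the rescaled metric approaches $\tilde g_{y_1}$ as $t\to-\infty$.

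For the type I bound, the asymptotics $x_i(t)\sim k_i|t|$ with $k_i>0$ show $g(t)\sim|t|\,g_\infty$ for the fixed limiting metric $g_\infty=(D-By_1)(y_1 Q_{|_{\mathfrak p_1}}\oplus Q_{|_{\mathfrak p_2}})$; since $|\Rm(c\,g_0)|_{c\,g_0}=c^{-1}|\Rm(g_0)|_{g_0}$, the product $|t|\cdot|\Rm(g(t))|_{g(t)}$ stays bounded as $t\to-\infty$. Uniqueness is ODE uniqueness on $\{x_2>0\}$ applied to the given initial data. The main obstacle is the asymptotic analysis at $-\infty$: ruling out $L_->y_1$ and extracting the linear growth rate of $x_2$, for which the logarithmic-divergence trick above is the essential tool; it also underpins the interpretation that the ancient solution flows from $\tilde g_{y_2}$ at $t=T$ to $\tilde g_{y_1}$ at $t=-\infty$.
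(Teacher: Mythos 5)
Your overall strategy coincides with the paper's: trap $y(t)$ in $(y_1,y_2)$ by Proposition \ref{prop:3}, obtain the forward finite-time singularity from Theorem \ref{theo:7}, and extend backward by observing that in reversed time $\tau=-t$ both $x_1$ and $x_2$ are increasing with uniformly bounded derivatives, so the solution continues for all $\tau>0$ and grows at most (in fact exactly) linearly, giving the type I bound. Your logarithmic-divergence argument identifying $L_-=y_1$ at $t\to-\infty$ is a slightly more explicit substitute for the paper's route, which instead derives $x_i(\tau)\sim k_i\tau$ and reads off that $k_1/k_2$ must be a root of \eqref{eqn:21}, hence equal to $y_1$ by monotonicity of $y$; both work.

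However, your forward argument that $y(t)\to y_2$ as $t\to T$ has a genuine gap. You claim that if $L_+<y_2$ then ``$x_2$ stays bounded above'', hence $\dot y$ is bounded below by a positive constant, ``contradicting $y\le L_+$''. On the \emph{finite} interval $[0,T)$ a positive lower bound $\dot y\ge c$ only yields $y(t)\ge y(0)+ct\le y(0)+cT$, which need not exceed $L_+$; there is no contradiction. To close the gap you need $\int_0^T x_2(t)^{-1}\,dt=+\infty$. This does hold: since $y>y_1$ forces $x_2<x_1/y_1\to0$, and $\dot x_2\ge -D$ integrated from $t$ to $T$ gives $x_2(t)\le D(T-t)$, so the integral diverges logarithmically --- i.e.\ exactly the device you use at $-\infty$, applied at $T$. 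Alternatively (this is what the paper does) one invokes the asymptotics from the proof of Theorem \ref{theo:7}: both $x_i$ vanish linearly, $x_i(t)=k_i(T-t)+o(T-t)$, with $(k_1,k_2)$ defining an invariant Einstein metric, so $k_1/k_2\in\{y_1,y_2\}$, and monotone increase of $y$ from $y(0)>y_1$ forces $k_1/k_2=y_2$. Either repair is short, but as written that step is invalid, and it is precisely the step that justifies the claim that the solution emanates from the Einstein metric corresponding to $y_2$ at the singular time.
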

\begin{proof}
We begin by noticing that theorem \ref{theo:7} implies that there exists a positive constant $T<\infty$ such that there exists a unique solution to the Ricci flow with initial condition $y(0)$ and defined on the maximal time interval $[0,T)$. 
Moreover, $x_1(t)\rightarrow0$, as $t\rightarrow T$. 
By proposition \ref{prop:3}, we have that $y_1<y(t)<y_2$, for all $t\in[0,T)$. 
Hence, we also have that $x_2(t)\rightarrow0$, as $t\rightarrow T$. This means that $G/K$ shrinks to a point, as $t$ approaches the singular time $T$. 
The evolution equation of $y(t)$ is given by
\begin{equation}\label{eqn:22}
\dot{y}(t)=-\frac{(A+B)}{x_2(t)}(y(t)-y_1)(y(t)-y_2).
\end{equation}
Hence, $y(t)$ is increasing in $t$. 
By the proof of theorem \eqref{theo:7}, we have that both $x_1(t)$ and $x_2(t)$ tend to zero linearly in $t$. 
Recall that, as we approach the singular time $T$, $y(t)$ is increasing in $t$ and it approaches a limit. 
Then, near the singular time
\begin{align*}
&x_1(t)=k_1(T-t)+o((T-t)),\\
&x_2(t)=k_2(T-t)+o(T-t),
\end{align*}
where $k_1$ and $k_2$ are two positive coefficients. 
Substituting these expressions in \eqref{eqn:18}-\eqref{eqn:19} and taking the limit as $t\rightarrow T$, we obtain
\begin{align*}
&-k_1=-C-A\frac{k_1^2}{k_2^2},\\
&-k_2=-D+B\frac{k_1}{k_2},
\end{align*}
which means that $k_1Q_{|_{\mathfrak p_1}}\oplus k_2Q_{|_{\mathfrak p_2}}$ corresponds to a $G$-invariant Einstein metric on $G/K$. 
This implies that
\begin{equation*}
\lim_{t\rightarrow T}{y(t)}=y_2.
\end{equation*}
We now have to show the existence of ancient solutions to the Ricci flow. In order to do this, it is convenient to change the time parameter from $t$ to $\tau=-t$. Let $'$ denote the derivative with respect to $\tau$. Then the system \eqref{eqn:18}-\eqref{eqn:19} becomes
\begin{align}
x'_1(\tau)&=C+Ay(\tau)^2,\label{eqn:28}\\
x_2'(\tau)&=D-By(\tau),\label{eqn:29}
\end{align}
together with the condition $x_1(\tau)>0$ and $x_2(\tau)>0$. The evolution equation of $y(\tau)$ becomes:
\begin{equation}\label{eqn:30}
y'(\tau)=\frac{A+B}{x_2(\tau)}(y(\tau)-y_1)(y(\tau)-y_2).
\end{equation}
By proposition \ref{prop:3}, $y_1<y(\tau)<y_2$ along any solution to the Ricci flow. 
Hence, $y(\tau)$ is decreasing in $\tau$.
Moreover, the right-hand side of \eqref{eqn:28}-\eqref{eqn:29} are bounded for all $\tau$ such that a solution to the Ricci flow exists and the bounds depend on $y_2$. 
From \eqref{eqn:28}, we see that $x_1'(\tau)>0$ for all $\tau$. 
We also have that
\begin{equation*}
\begin{aligned}
x_2'(\tau)>D-By_2>D-\frac{BD}{A+B}>0
\end{aligned}
\end{equation*}
where we have used the fact that
\begin{equation*}
y_2=\frac{D+\sqrt{D^2-4C(A+B)}}{2(A+B)},
\end{equation*}
as it is the biggest solution to \eqref{eqn:21}. 
Hence, both $x_1(\tau)$ and $x_2(\tau)$ are increasing in $\tau$ with bounded derivatives. 
Moreover, from \eqref{eqn:30}, we have that $y(\tau)$ is decreasing in $\tau$. 
We can then apply standard ODE theory and conclude that a solution to the Ricci flow exists for all $\tau>0$, as long as $y(\tau)>y_1$. 
By the uniqueness of the solution to the Ricci flow equation, $y(\tau)$ cannot become $y_1$, as long as $x_1(\tau)$ and $x_2(\tau)$ are positive. 
Hence, we can conclude that for every initial condition $y_1<y(0)<y_2$, there exists a unique ancient solution to the Ricci flow defined for $\tau\in[0,+\infty)$, i.e. for $t\in(-\infty,0]$. 
To finish the proof we notice that, by the asymptotic analysis and the fact that $y(\tau)$ remains bounded, we have that both $x_1(\tau)$ and $x_2(\tau)$ increase linearly in $\tau$, as $\tau\rightarrow+\infty$. 
This implies that the ancient solution is of type I and that, as $\tau\rightarrow+\infty$, $y(\tau)\rightarrow y_1$.
\end{proof}
To conclude the study of the Ricci flow in this case, we will consider the other two possible initial conditions \textup{(a)(1)} and \textup{(a)(3)}. The following two theorems hold.
\begin{theorem}\label{theo:2}
If $y(0)<y_1$, there exists a positive constant $T<\infty$ such that there exists a unique type I ancient solution to the Ricci flow equation defined on $(-\infty,T)$. As $t\rightarrow T$, the fibre $H/K$ in \eqref{eqn:23} shrinks to a point and $G/K$ collapses in the Hausdorff-Gromov sense to $G/H$. Furthermore, as $t\rightarrow -\infty$, $y(t)\rightarrow y_1$.
\end{theorem}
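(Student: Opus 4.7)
The plan is to mirror the strategy of the proof of Theorem \ref{theo:4}, combining the forward-time conclusions of Theorem \ref{theo:7} with a backward-time analysis under the substitution $\tau=-t$, and to use Proposition \ref{prop:3} to confine the solution to the invariant region $\{y<y_1\}$.

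For the forward direction, Theorem \ref{theo:7} already produces a unique maximal solution on some interval $[0,T)$ with $T<\infty$ at which a type I singularity forms. By Proposition \ref{prop:3} the initial condition $y(0)<y_1$ is preserved, and since $(y-y_1)(y-y_2)>0$ whenever $y<y_1<y_2$, equation \eqref{eqn:20} gives $\dot y(t)<0$, so $y(t)$ is strictly decreasing and bounded above by $y(0)<y_1$ throughout $[0,T)$. This rules out case (i) of Theorem \ref{theo:7}, because that case forces $x_1(t)/x_2(t)\to y_1$ or $y_2$. Hence we are in case (ii): $x_1(t)\to 0$ and $x_2(t)\to k_2>0$ as $t\to T$. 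Since $x_1$ scales the fibre direction $\mathfrak p_1\simeq\mathfrak h/\mathfrak k$, the fibre $H/K$ shrinks to a point, and Proposition \ref{prop:2} delivers the claimed Hausdorff-Gromov collapse $G/K\to G/H$.

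For the backward direction, set $\tau=-t$ and pass to the system \eqref{eqn:28}-\eqref{eqn:30}. On $\{y<y_1\}$ equation \eqref{eqn:30} gives $y'(\tau)>0$, so $y(\tau)$ is monotonically increasing and bounded above by $y_1$. Using the Vieta relations $y_1+y_2=D/(A+B)$ and $y_2>0$ one gets $y_1<D/(A+B)<D/B$, so $x_2'(\tau)=D-By(\tau)\ge D-By_1>0$; likewise $x_1'(\tau)=C+Ay(\tau)^2\in[C,\,C+Ay_1^2]$. Both derivatives are therefore pinched between positive constants, so $x_1$ and $x_2$ are strictly increasing and grow at most linearly in $\tau$. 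Standard ODE theory then extends the solution to all $\tau\ge 0$, i.e.\ the Ricci flow is defined on $(-\infty,T)$, and uniqueness is inherited from the Lipschitz character of the vector field on $\{x_2>0\}$.

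Finally, the limit $L=\lim_{\tau\to\infty} y(\tau)\in(y(0),y_1]$ must equal $y_1$: otherwise $(y-y_1)(y-y_2)\ge\delta>0$ for large $\tau$ while $x_2(\tau)\le D\tau+x_2(0)$, and \eqref{eqn:30} would force $y'(\tau)\ge c/\tau$, whose integral diverges and contradicts the bound $y(\tau)<y_1$. Once $L=y_1$ is known, $x_1'(\tau)\to C+Ay_1^2$ and $x_2'(\tau)\to D-By_1$, so both $x_i(\tau)$ grow linearly in $\tau=|t|$; consequently $|\Rm(g(t))|_{g(t)}$ decays like $|t|^{-1}$, yielding the type I bound $\limsup_{t\to-\infty}|t|\sup|\Rm(g(t))|_{g(t)}<\infty$. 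The main obstacle in this plan is precisely this last step, namely upgrading the monotone convergence of $y(\tau)$ to the identification $L=y_1$, which is why the integral-divergence argument above is the key input.
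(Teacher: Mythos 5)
Your proof is correct and follows essentially the same route as the paper's: forward in time it combines Theorem \ref{theo:7} with the preserved condition $y<y_1$ to force the fibre-collapse alternative, and backward in time it reverses the time parameter, pins the derivatives between positive constants to obtain linear growth and the type I bound. The only cosmetic difference is that you identify the backward limit $y(\tau)\rightarrow y_1$ via an integral-divergence argument, whereas the paper reads it off from the asymptotic linear growth rates in \eqref{eqn:28}--\eqref{eqn:29}; both are valid.
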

\begin{proof}
As the initial condition is preserved under the Ricci flow, equation \eqref{eqn:22} implies that $y(t)$ is decreasing in $t$. 
By theorem \ref{theo:7}, there exists $T<\infty$ such that there exists a unique solution to the HRF which is defined on the maximal time interval $[0,T)$. 
Moreover, as we approach the singular time, $y(t)\rightarrow0$. 
In fact, suppose that $y(t)\rightarrow y_0$, as $t\rightarrow T$, where $y_0>0$. 
This would be the case of both $x_1(t)$ and $x_2(t)$ going to zero, as $t\rightarrow T$. 
From the proof of theorem \ref{theo:7}, we know that $x_1(t)$ and $x_2(t)$ tend to zero linearly in $t$. 
Using this fact and \eqref{eqn:17}-\eqref{eqn:25}, we can compute that $y_0<y_1$ has to be a $G$-invariant Einstein metric on $G/K$. 
However, this cannot happen, because $G/K$ carries exactly two homogeneous Einstein metrics, which correspond to $y_1$ and $y_2$. 
We can then conclude that, as $t\rightarrow T$, $y(t)\rightarrow0$, which means that $x_1(t)$ tends to zero, while $x_2(t)$ remains strictly positive. 
This tells us that the singular behaviour which characterises the HRF in this case is the shrinking of the fibre $H/K$ in \eqref{eqn:23} and the collapsing of $G/K$ to $G/H$ in the Hausdorff-Gromov sense.

We will now show the existence of ancient solutions to the Ricci flow. As we did in the proof of theorem \ref{theo:4}, let us change time parameter from $t$ to $\tau=-t$. 
Then, from \eqref{eqn:30}, we have that $y(\tau)$ is increasing in $\tau$. 
As $y(\tau)<y_1$ for all $\tau$ such that a solution to the above system exists, the derivatives $x_1'(\tau)$ and $x_2'(\tau)$ remain bounded. 
We then have that, as $\tau$ increases, the solution to the Ricci flow exists as long as $x_1(\tau)$ and $x_2(\tau)$ remain positive and $y(\tau)<y_1$. 
As $x_1(\tau)$ and $x_2(\tau)$ are increasing in $\tau$, the solution to the Ricci flow exists as long as $y(\tau)<y_1$. By the uniqueness of the solution, $y(\tau)$ cannot reach $y_1$ as long as $x_1(\tau)$ and $x_2(\tau)$ remain positive. 
Hence, the solution exists for all $\tau>0$. 
Moreover, using the asymptotic analysis and the fact that $y(\tau)$ remains bounded, it is possible to compute that $x_1(\tau)$ and $x_2(\tau)$ both increase linearly in $\tau$, as $\tau\rightarrow+\infty$. 
This fact implies that the ancient solution is of type I and, using \eqref{eqn:28} and \eqref{eqn:29}, it also implies that $y(\tau)\rightarrow y_1$, as $\tau\rightarrow+\infty$. 
This concludes the proof of the theorem.
\end{proof}
\begin{theorem}\label{theo:3}
If $y(0)>y_2$, there exists a positive constant $T<\infty$ such that there exists a unique solution to the Ricci flow equation defined on $[0,T)$. 
As $t\rightarrow T$, $G/K$ shrinks to a point and $y(t)\rightarrow y_2$. 
In particular, there are no ancient solutions to the HRF in this case.
\end{theorem}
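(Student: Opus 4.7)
The plan splits into two parts: (i) identifying the forward singular behaviour by combining Theorem~\ref{theo:7} with Proposition~\ref{prop:3}, and (ii) ruling out ancient solutions by using the first integral $\Lambda$ from \eqref{eqn:24} to reduce the backward flow to a single autonomous ODE for $y$, which turns out to blow up in finite backward time.

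For part (i), Theorem~\ref{theo:7} gives a unique solution on a maximal interval $[0,T)$ with $T<\infty$, and by Proposition~\ref{prop:3} the inequality $y(t)>y_2$ is preserved. Since both factors of $(y-y_1)(y-y_2)$ are positive in this range, equation \eqref{eqn:22} gives $\dot y(t)<0$, so $y(t)$ decreases monotonically to some limit $y_\infty\ge y_2$ as $t\to T$. Theorem~\ref{theo:7} leaves two possibilities at $T$: either both $x_i(t)\to 0$ and $y_\infty$ is a positive root of \eqref{eqn:26}, or $x_1(t)\to 0$ while $x_2(t)$ converges to a strictly positive limit (forcing $y(t)\to 0$). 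The latter is incompatible with $y(t)>y_2>0$, so the former holds; and since $y_\infty\ge y_2>y_1$, the only root available is $y_2$. This gives $G/K\to\{\text{pt}\}$ and $y(t)\to y_2$ as $t\to T$.

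For part (ii), reverse time by setting $\tau=-t$; by \eqref{eqn:30} and Proposition~\ref{prop:3}, $y(\tau)$ is strictly increasing in $\tau$ from $y(0)>y_2$. Since $y(0)\neq y_1,y_2$, the conserved quantity $\Lambda$ of \eqref{eqn:24} is a strictly positive constant, and on the region $y>y_2>y_1$ it solves to
\begin{equation*}
x_2(\tau)=\Lambda^{-1}\,(y(\tau)-y_1)^{-\alpha}(y(\tau)-y_2)^{\beta},
\end{equation*}
with $\alpha=\tfrac{1}{A+B}\bigl(B+\tfrac{D-By_2}{y_2-y_1}\bigr)>0$ and $\beta=\tfrac{1}{A+B}\tfrac{D-By_2}{y_2-y_1}>0$ (positivity of $\beta$ uses $D=(A+B)y_2+C/y_2>By_2$). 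Substituting into \eqref{eqn:30} produces the autonomous equation
\begin{equation*}
y'(\tau)=\Lambda(A+B)(y-y_1)^{1+\alpha}(y-y_2)^{1-\beta}.
\end{equation*}
A direct computation gives $\alpha-\beta=\tfrac{B}{A+B}$, so the right-hand side grows like a positive constant times $y^{\,2+B/(A+B)}$ as $y\to\infty$, with exponent strictly greater than $1$. Comparison with the model ODE $y'=c\,y^{p}$, $p>1$, forces $y(\tau)$ to blow up at some finite $\tau^{*}>0$; the ancient solution therefore cannot be extended to all of $(-\infty,0]$.

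The main obstacle is the algebraic bookkeeping: identifying the reduced autonomous ODE from \eqref{eqn:24} and checking that the critical exponent $2+\alpha-\beta$ exceeds $1$. Once this is in hand, the finite-time backward blow-up follows from a routine Bernoulli-type comparison, and the non-existence of ancient solutions is immediate.
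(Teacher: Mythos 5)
Your proposal is correct. For the forward part you follow the paper's own route: Theorem \ref{theo:7} together with the invariance of $\{y>y_2\}$ forces both $x_1(t)$ and $x_2(t)$ to vanish at $T$ (the collapsing-fibre alternative would require $y\to 0$), and the linear asymptotics then pin the limit of $y$ to the root $y_2$. For the non-existence of ancient solutions you take a genuinely different route. The paper argues qualitatively in backward time: $x_1'(\tau)>C>0$ would force $x_1\to+\infty$ along an ancient solution, while $x_2(\tau)$ attains a maximum when $y(\tau)=D/B$ and decreases thereafter, so $y=x_1/x_2\to+\infty$ and \eqref{eqn:29} drives $x_2'(\tau)\to-\infty$, contradicting $x_2>0$. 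You instead use the first integral \eqref{eqn:24} to eliminate $x_2$ and turn \eqref{eqn:30} into the autonomous equation $y'=(A+B)\Lambda\,(y-y_1)^{1+\alpha}(y-y_2)^{1-\beta}$, whose right-hand side grows like $y^{2+B/(A+B)}$, so that $\int^{\infty}dy/F(y)<\infty$ gives finite backward-time escape of $y$ and hence, via $x_2\sim\Lambda^{-1}y^{-B/(A+B)}\to 0$, termination of the flow. Your exponent bookkeeping checks out: $\alpha-\beta=B/(A+B)$ and $\beta>0$ because $D=(A+B)y_2+C/y_2>By_2$ (using $C>0$, which is the standing hypothesis in cases (a)--(c)). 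The paper's softer argument transfers verbatim to cases (b) and (c), where the first integral takes a different form; yours is more quantitative and gives the precise collapse rate of $x_2$ as a by-product. Two points are worth making explicit: first, before the comparison with $y'=cy^p$ applies you need $y(\tau)\to\infty$, which follows because the right-hand side of your autonomous ODE has no zeros on $[y(0),\infty)$ so the increasing function $y(\tau)$ cannot converge to a finite limit; second, the finite-time blow-up of $y$ rules out an ancient solution because the conserved quantity forces $x_2\to 0$ there, i.e.\ the metric degenerates.
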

\begin{proof}
By \eqref{eqn:22} and the fact that the initial condition is preserved under the Ricci flow, we have that $y(t)$ is decreasing in $t$. 
By theorem \ref{theo:7}, there exists a positive constant $T<\infty$ such that there exists a unique solution to the Ricci flow with initial condition given by $y(0)$ and defined on the maximal time interval $[0,T)$. 
Theorem \ref{theo:7} also tells us that, as $t\rightarrow T$, $x_1(t)\rightarrow0$. Moreover, as $y(t)>y_2$ for all $t\in[0,T)$, we have that $x_1(t)\rightarrow0$ implies that $x_2(t)\rightarrow0$, as $t\rightarrow T$. 
Hence, the behaviour of the Ricci flow, as $t$ approaches the singular time $T$, is given by the shrinking of the whole space to a point in finite time. 
Moreover, by the proof of theorem \ref{theo:7}, we know that $x_1(t)$ and $x_2(t)$ tend to zero linearly in $t$. Using this fact, from \eqref{eqn:18}-\eqref{eqn:19}, we can compute that $y(t)\rightarrow y_2$, as $t\rightarrow T$. 

It remains to show that, with this initial condition, there are no ancient solutions to the HRF. 
Let us change time parameter from $t$ to $\tau=-t$. 
We are then considering the system given by \eqref{eqn:28}-\eqref{eqn:29}. 
By \eqref{eqn:30}, $y(\tau)$ is increasing in $\tau$. 
The evolution equation \eqref{eqn:28} implies that $x_1(\tau)$ is increasing in $\tau$. 
Moreover, $x_1'(\tau)>C>0$.
We then need to understand the behaviour of $x_2(\tau)$, for all $\tau$ such that a solution exists. 
If $x_2'(0)<0$, by \eqref{eqn:29} $x_2'(\tau)<0$ for all $\tau$ such that a solution exists. 
If $x_2'(0)$ is non negative, then $x'_2(\tau)$ will be positive until $\tau=\tau_0$ such that $x_2'(\tau_0)=0$. 
In fact, $x_2'(\tau)=0$ if and only if $y(\tau)=\frac{D}{B}>y_2$. 
Moreover, if $y(0)<\frac{D}{B}$, then $y(\tau)$ will become $\frac{D}{B}$ in finite time. 
In fact, if $y(\tau)$ approaches a limit, then this limit has to correspond to a homogeneous Einstein metric on $G/K$, but this is not possible because the only two invariant Einstein metrics are $y_1$ and $y_2$ and $y(\tau)>y_2$, for all $\tau$ such that a solution exists.
As 
\begin{equation*}
x''_2(\tau)=-By'(\tau)<0,
\end{equation*}
for all $\tau$, $\tau_0$ is the maximum point of $x_2(\tau)$.
Then, $x_2'(\tau)<0$ for all $\tau>\tau_0$. 
If a solution to \eqref{eqn:28}-\eqref{eqn:29} existed for all $\tau>0$, then $x_1(\tau)$ would diverge to $+\infty$ and $x_2(\tau)$ would remain bounded and positive. 
From \eqref{eqn:29}, this implies that $x'_2(\tau)\rightarrow-\infty$, as $\tau\rightarrow+\infty$, which is not possible. 
Hence, the solution will have to stop at a finite $\bar{\tau}$, which is characterised by $x_2(\tau)$ becoming zero.
This concludes the proof of the theorem.
\end{proof}

\subsection{Case (b)}
We will now consider the case in which $G/K$ admits exactly one $G$-invariant Einstein metric, up to scaling. Let $\bar{y}$ be the unique solution to \eqref{eqn:26}. We then have that
\begin{equation*}
\frac{\dot{x}_2(t)}{x_2(t)}=\frac{\dot{y}(t)}{A+B}\frac{D-y(t)}{(y(t)-\bar{y})^2},
\end{equation*}
which integrated gives the first integral
\begin{equation}\label{eqn:31}
\widetilde{\Lambda}=\frac{1}{x_2(t)}\exp{\bigg(-\frac{1}{A+B}\frac{D-y(t)}{y(t)-\bar{y}}\bigg)}|y(t)-\bar{y}|^{-\frac{1}{A+B}},
\end{equation}
where $\widetilde{\Lambda}$ is a positive constant. 

We will now describe the behaviour of the Ricci flow, according to the initial condition. 
We can have two possible initial conditions:
\begin{enumerate}
\renewcommand{\theenumi}{(b)(\arabic{enumi}}
\item $y(0)<\bar{y}$,
\item $y(0)>\bar{y}$.
\end{enumerate}
\begin{remark}
We note that the evolution equation of $y(t)$ is given by
\begin{equation*}
\dot{y}(t)=-\frac{A+B}{x_2(t)}(y(t)-\bar{y})^2.
\end{equation*}
Hence, $y(t)$ is always decreasing in $t$.
\end{remark}
Using the first integral \eqref{eqn:31} and the fact that the solution to the HRF is unique we can prove the following proposition, which is an analogue of proposition \ref{prop:3}.
\begin{proposition}
The initial conditions \textup{(b)(1)} and \textup{(b)(2)} above are preserved under the Ricci flow.
\end{proposition}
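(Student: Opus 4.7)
The proposition is the exact analogue of Proposition \ref{prop:3} for the single-Einstein case, and the plan is simply to transcribe the argument used there with the pair $(y_1,y_2)$ replaced by the unique double root $\bar{y}$. First I would record the evolution equation
\begin{equation*}
\dot{y}(t)=-\frac{A+B}{x_2(t)}\bigl(y(t)-\bar{y}\bigr)^{2},
\end{equation*}
which is already written down in the remark preceding the statement. Since this right-hand side vanishes precisely when $y(t)=\bar{y}$, the constant function $y(t)\equiv\bar{y}$ (paired with the associated linearly-decreasing $x_1,x_2$ that come from the Einstein metric) is a solution of \eqref{eqn:18}-\eqref{eqn:19}. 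This is exactly the content of Remark \ref{remark:1} adapted to the present case: $\bar{y}$ is fixed by the HRF.

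Next I would invoke the uniqueness half of the standard ODE theorem already cited in the proof of Theorem \ref{theo:7}: the vector field on the right-hand side of \eqref{eqn:18}-\eqref{eqn:19} is $C^{1}$ on the domain $\mathbf{D}=\{x_{2}\neq 0\}$, so through any initial data there passes a unique maximal solution. If a solution with $y(0)\neq\bar{y}$ satisfied $y(t_{0})=\bar{y}$ for some $t_{0}$ in its interval of existence, then at time $t_{0}$ it would coincide with the constant solution $y\equiv\bar{y}$, and uniqueness (applied backwards from $t_{0}$) would force $y(0)=\bar{y}$, a contradiction. Continuity of $t\mapsto y(t)$ on the connected interval of existence then traps $y(t)$ in whichever of the two open half-lines $\{y<\bar{y}\}$, $\{y>\bar{y}\}$ contains $y(0)$, which is precisely the assertion of the proposition.

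I would close with the one-line remark that the same conclusion can be read off directly from the first integral \eqref{eqn:31}: since $\widetilde{\Lambda}$ is a positive constant along the flow and the factor $|y(t)-\bar{y}|^{-1/(A+B)}$ would blow up if $y(t)$ ever reached $\bar{y}$, the value $\bar{y}$ is unattainable for any trajectory that does not start there. There is no real obstacle in this proof — the only thing to check carefully is that $y(t)$ is genuinely defined and continuous throughout the maximal interval of existence, which is guaranteed by Theorem \ref{theo:7} since $x_{2}(t)$ stays strictly positive there.
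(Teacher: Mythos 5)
Your argument is essentially the paper's: the proposition is proved there by combining uniqueness of solutions with the observation that $\bar y$ is a fixed point of the flow (exactly the argument of Proposition \ref{prop:3} and Lemma \ref{lemma:5}), with the first integral \eqref{eqn:31} offered as an alternative route. The only quibble is with your closing one-liner: in \eqref{eqn:31} the factor $|y(t)-\bar{y}|^{-\frac{1}{A+B}}$ is multiplied by an exponential term whose exponent also blows up as $y(t)\rightarrow\bar{y}$ and can drive the product to zero rather than infinity on one side, so that alternative argument needs a little more care; your main uniqueness argument, however, is complete as written.
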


We then have that the following theorem holds:
\begin{theorem}
If $y(0)<\bar{y}$, there exists a positive constant $T<\infty$ such that there exists a unique type I ancient solution to the Ricci flow \eqref{eqn:18}-\eqref{eqn:19} with initial condition $y(0)$ and defined on the maximal time interval $(-\infty,T)$. As $t\rightarrow T$, the fibre $H/K$ in \eqref{eqn:23} shrinks to a point and $G/K$ collapses to $G/H$ in the Hausdorff-Gromov sense. Moreover, as $t\rightarrow-\infty$, $y(t)\rightarrow\bar{y}$.

On the other hand, if $y(0)>\bar{y}$, there exists a positive constant $T<\infty$ such that there exists a unique solution to the Ricci flow with initial metric given by $y(0)$ and defined on the maximal time interval $[0,T)$. 
As $t\rightarrow T$, the whole space $G/K$ shrinks to a point and $y(t)\rightarrow\bar{y}$. 
In particular, there are no ancient solutions to the Ricci flow with initial condition $y(0)$.
\end{theorem}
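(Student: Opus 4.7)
The plan is to mirror the dichotomy used in Theorems \ref{theo:2} and \ref{theo:3}, with the unique root $\bar y$ playing the combined role of $y_1$ and $y_2$ and with the global monotonicity of $y(t)$ from the preceding remark replacing the case split of Lemma \ref{lemma:5}. The key ingredients that carry over are: Theorem \ref{theo:7} for forward maximal existence and its classification of the singular behaviour into total shrinking versus fibre collapse; the immediately preceding proposition asserting that the sign of $y(0)-\bar y$ is preserved; the asymptotic ansatz $x_i(t)=k_i(T-t)^{n_i}+o((T-t)^{n_i})$ used to detect Einstein metrics at the singular time; and Proposition \ref{prop:2} to translate fibre collapse into Hausdorff-Gromov convergence to $G/H$.

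Case $y(0)<\bar y$. Theorem \ref{theo:7} already yields $T<\infty$ and a unique forward solution on $[0,T)$ with $x_1(t)\to 0$. I would rule out its case~i): if $x_2(t)\to 0$ too, the linear ansatz forces $\lim_{t\to T}y(t)$ to be a positive root of \eqref{eqn:26}, hence $\bar y$, but $y(t)$ is decreasing with $y(0)<\bar y$, a contradiction. So case~ii) of Theorem \ref{theo:7} holds and Proposition \ref{prop:2} produces the Hausdorff-Gromov collapse $G/K\to G/H$. For the ancient extension, set $\tau=-t$, so that $x_1'(\tau)=C+Ay(\tau)^2$ and $x_2'(\tau)=D-By(\tau)$ as in \eqref{eqn:28}-\eqref{eqn:29}. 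Uniqueness and the preservation proposition keep $y(\tau)<\bar y$, so both $x_i'$ are bounded on any $\tau$-interval with $x_1'>C>0$ and $x_2'>D-B\bar y>0$ (the latter positive because $\bar y=D/(2(A+B))$ gives $B\bar y<D$). Standard ODE continuation yields existence for all $\tau>0$; both $x_i$ then grow linearly, and plugging $x_i(\tau)=k_i\tau+o(\tau)$ into \eqref{eqn:28}-\eqref{eqn:29} identifies $(k_1,k_2)$ with the Einstein metric $\bar y$, giving $y(\tau)\to\bar y$ and the type~I rate $|\Rm(g(t))|_{g(t)}\sim c|t|^{-1}$ at $-\infty$.

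Case $y(0)>\bar y$. Forward, Theorem \ref{theo:7} again yields $T<\infty$ with $x_1(t)\to0$; since $y(t)>\bar y>0$ is preserved, $x_2(t)$ cannot have a positive limit (else $y\to 0$), so both $x_i$ collapse, and the same Theorem's linear ansatz identifies $\lim_{t\to T}y(t)=\bar y$. To exclude an ancient extension, I pass to $\tau=-t$ and follow Theorem \ref{theo:3}: $y(\tau)$ is increasing, and cannot converge to a finite limit (the only candidate would be an Einstein value, and $\bar y$ is already below $y(0)$), so $y(\tau)\to\infty$ on its interval of existence. Since $\bar y=D/(2(A+B))<D/B$, past the instant $\tau_0$ with $y(\tau_0)=D/B$ one has $x_2'(\tau)<0$ and $x_2''(\tau)=-By'(\tau)<0$. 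If the solution existed for all $\tau>0$, then $x_1(\tau)\to\infty$ (from $x_1'>C$) while $x_2(\tau)$ would remain positive and bounded, forcing $x_2'(\tau)=D-By(\tau)\to-\infty$, contradicting positivity of $x_2$. Hence the reversed flow terminates at some finite $\bar\tau$ with $x_2(\bar\tau)=0$, ruling out ancient extensions.

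The main obstacle is the ancient existence in $y(0)<\bar y$: because $\bar y$ is a double root, the right-hand side of $\dot y(\tau)$ degenerates quadratically near $\bar y$, and one must be sure $y(\tau)$ does not approach $\bar y$ so fast that the factor $1/x_2(\tau)$ triggers finite-$\tau$ blow-up. The cleanest safeguard is the first integral \eqref{eqn:31}: it pins $x_2(\tau)$ to an explicit positive function of $|y(\tau)-\bar y|$, which stays bounded below along the flow and lets the standard continuation argument close. Once this point is settled, the rest is a routine transcription of the arguments in Theorems \ref{theo:4}, \ref{theo:2}, and \ref{theo:3}.
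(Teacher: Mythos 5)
Your proof is correct and follows exactly the route the paper itself indicates: the paper omits this proof, stating only that the first part proceeds as in Theorem \ref{theo:2} and the second as in Theorem \ref{theo:3}, which is precisely what you carry out. The one remark is that your worry about the double root causing finite-$\tau$ blow-up is a non-issue, since $x_1'(\tau)$ and $x_2'(\tau)$ are bounded by constants depending only on $\bar y$ and $x_2(\tau)$ is increasing (hence bounded away from zero), so the standard continuation argument already closes without needing the first integral \eqref{eqn:31}.
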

To prove the first part of the theorem, we proceed as in the proof of theorem \ref{theo:2} and the proof of the second part of the theorem works in the same way as the one of theorem \ref{theo:3}. As these proofs are very similar, we will omit it.

\subsection{Case (c)}
In this case, by theorem \ref{theo:7}, the behaviour of the Ricci flow starting at any $G$-invariant Riemannian metric will be characterised by the shrinking of the fibre $H/K$ in \eqref{eqn:23} and the collapsing in the Hausdorff-Gromov sense of $G/K$ to $G/H$. 
Moreover, the following proposition is true.
\begin{proposition}
If $G/K$ described above does not carry any $G$-invariant Einstein metric, then the Ricci flow on $G/K$ starting at any homogeneous Riemannian metric does not have ancient solutions.
\end{proposition}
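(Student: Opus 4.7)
The plan is to suppose that an ancient solution exists and derive a contradiction from the quadratic $F(y) := C - Dy + (A+B)y^2$, whose discriminant $D^2 - 4C(A+B)$ is strictly negative in case (c), since this is precisely the no-Einstein hypothesis combined with $C > 0$. It follows that $F(y) \geq F_{\min} := C - D^2/(4(A+B)) > 0$ for every $y \in \mathbb{R}$.

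I would first reverse time by setting $\tau = -t$, so that \eqref{eqn:18}--\eqref{eqn:19} becomes \eqref{eqn:28}--\eqref{eqn:29} with $y'(\tau) = F(y(\tau))/x_2(\tau) > 0$, whence $y$ is strictly increasing in $\tau$. Assume for contradiction that an ancient solution exists, i.e.\ $x_1(\tau), x_2(\tau) > 0$ for all $\tau \geq 0$.

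The next step is to show that $y(\tau) \leq D/B$ for every $\tau \geq 0$. Indeed, if $y(\tau_0) > D/B$ at some $\tau_0 \geq 0$, then monotonicity gives $y(\tau) > D/B$ for all $\tau \geq \tau_0$, so $x_2'(\tau) = D - By(\tau) < D - By(\tau_0) < 0$; integrating, $x_2$ would vanish in finite additional $\tau$, contradicting the ancient assumption. This bound also yields $x_2'(\tau) \leq D$, hence $x_2(\tau) \leq x_2(0) + D\tau$.

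Combining the two estimates,
\[
y'(\tau) = \frac{F(y(\tau))}{x_2(\tau)} \geq \frac{F_{\min}}{x_2(0) + D\tau},
\]
and integrating from $0$ to $\tau$ gives $y(\tau) - y(0) \geq (F_{\min}/D)\log(1 + D\tau/x_2(0)) \to \infty$ as $\tau \to \infty$, contradicting $y(\tau) \leq D/B$. Hence no ancient solution can exist. The main subtlety is ensuring the dichotomy \emph{either $y$ eventually exceeds $D/B$, or $y$ stays bounded by $D/B$} is exhaustive and that each branch delivers a genuine contradiction; the strict positivity of $F_{\min}$, which is exactly the no-Einstein condition, is what powers both branches.
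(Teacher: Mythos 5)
Your argument is correct, and it establishes the proposition by the same overall strategy the paper intends (the paper omits the proof, deferring to the non-existence part of Theorem \ref{theo:3}): reverse time, use that $y$ is monotone increasing for the system \eqref{eqn:28}--\eqref{eqn:29}, and show that $x_2$ must vanish in finite backward time. Where you genuinely diverge is in the finishing move. The paper's template argues softly: it splits on the sign of $x_2'(0)$, asserts that $y(\tau)$ cannot converge to a finite limit because any such limit would have to be a root of \eqref{eqn:21} (hence an invariant Einstein metric, which does not exist), concludes $y(\tau)$ must cross $D/B$, and then derives a contradiction from $x_2'\rightarrow-\infty$ with $x_2$ bounded. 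You instead prove the a priori dichotomy directly: either $y$ exceeds $D/B$ at some time, in which case $x_2'$ is bounded above by a negative constant thereafter and $x_2$ vanishes in finite time, or $y\le D/B$ throughout, in which case $x_2$ grows at most linearly and the uniform lower bound $F(y)\ge F_{\min}>0$ (which is exactly the negative-discriminant, i.e.\ no-Einstein, condition) forces the divergent logarithmic estimate $y(\tau)-y(0)\ge (F_{\min}/D)\log(1+D\tau/x_2(0))$. This quantitative estimate is precisely what rigorously justifies the step the paper leaves implicit, namely that $y$ cannot stabilize at a value which is not a root of $C-Dy+(A+B)y^2$; in that sense your version is more self-contained, at the cost of using the global positivity of $F$, which is special to case (c), whereas the paper's soft argument is the one that transfers verbatim to Theorem \ref{theo:3} where $F$ does have roots.
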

The proof of this proposition is very similar to the non-existence proof of theorem \ref{theo:3}, so we will omit it.

Note that this case corresponds to homogeneous spaces which do not carry any invariant Einstein metric, see \cite{Wang-Ziller:1}.
In particular, if under the homogeneous Ricci flow the space can never collapse to a point, then it does not carry any invariant Einstein metric.
\begin{example}[\cite{Wang-Ziller:1}]
An example of a homogeneous space which corresponds to case (c) is given by $SU(4)/SU(2)$.
This example was first considered by Wang and Ziller in \cite{Wang-Ziller:1}.
In this case, $G=SU(4)$ and $K=SU(2)$.
Moreover, we have that there exists only one intermediate Lie group which is given by $H=Sp(2)$.
Both $G/H$ and $H/K$ are isotropy irreducible and every $SU(4)$-invariant Riemannian metric on $G/K$ is obtained from a submersion metric
\begin{equation*}
Sp(2)/SU(2)\rightarrow SU(4)/SU(2)\rightarrow SU(4)/Sp(2).
\end{equation*}
We can choose the negative of the Killing form $B$ as a background metric, so that $b_1=b_2=1$.
From \cite{Wang-Ziller:1}, we have that $d_1=7$, $d_2=5$ and the only nonzero structure constants are given by
\begin{equation*}
[111]=\frac{21}{20},\quad[122]=\frac{7}{4}.
\end{equation*}
Then,
\begin{equation*}
A=\frac{1}{8},\,\,B=\frac{7}{20},\,\,C=\frac{27}{40},\,\,D=1.
\end{equation*}
So equation \eqref{eqn:26} does not have any root.
This means that, under the homogeneous Ricci flow, the fibre $Sp(2)/SU(2)$ always shrinks to a point in finite time and $SU(4)/SU(2)$ collapses in the Hausdorff-Gromov sense to $SU(4)/Sp(2)$.
In particular, this shows that the space does not carry any $SU(4)$-invariant Einstein metric, which was first proved in \cite{Wang-Ziller:1}.
\end{example}

To conclude this section, we will consider $G/K$ as described above and such that $C=0$. 
In this case, equation \eqref{eqn:20} becomes
\begin{equation*}
\dot{y}(t)=\frac{1}{x_2(t)}(Dy(t)-(A+B)y(t)^2).
\end{equation*}
Hence, $G/K$ always admits a $G$-invariant Einstein metric which is unique up to scaling.
Let
\begin{equation*}
\bar{y}=\frac{D}{A+B}
\end{equation*} 
denote this homogeneous Einstein metric. 
Then the evolution equation of $y(t)$ becomes
\begin{equation*}
\dot{y}(t)=-(A+B)\frac{y(t)}{x_2(t)}(y(t)-\bar{y}).
\end{equation*}
We have two possible initial conditions: $y(0)<\bar{y}$ or $y(0)>\bar{y}$. 
These are preserved under the HRF, because we have uniqueness of the solution. 
Then, if $y(0)>\bar{y}$, $y(t)$ will be decreasing in $t$ and it will be increasing in $t$ if $y(0)<\bar{y}$. 
Moreover, $\bar{y}$ is fixed point of the HRF, so the regions $\{y>\bar{y}\}$ and $\{y<\bar{y}\}$ are preserved.
We can prove the following theorem.
\begin{theorem}
If $y(0)>\bar{y}$, there exists $T<\infty$ such that there exists a unique solution to the HRF on $G/K$ which is defined on the maximal time interval $[0,T)$. 
$T$ is a type I singularity and, as $t\rightarrow T$, $G/K$ shrinks to a point and $y(t)$ approaches the unique invariant Einstein metric $\bar{y}$. 
Moreover, there are no ancient solutions.

Whereas, if $y(0)<\bar{y}$, there exists $T<\infty$ such that there exists a unique type II ancient solution to the HRF on $G/K$ which is defined on $(-\infty,T)$.
$T$ is a type I singularity and, as $t\rightarrow T$, $G/K$ shrinks to a point and $y(t)$ approaches $\bar{y}$. 
Finally, as $t\rightarrow-\infty$, $y(t)\rightarrow0$ and $G/K$ collapses in the Hausdorff-Gromov sense to $G/H$.
\end{theorem}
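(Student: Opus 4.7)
The plan is to separate the analysis into the two invariant regions $\{y(0)>\bar{y}\}$ and $\{y(0)<\bar{y}\}$, which are preserved under the HRF by uniqueness together with the fact that $0$ and $\bar{y}$ are the fixed points of the scalar evolution $\dot{y}=-(A+B)y(y-\bar{y})/x_2$. In each case I will establish existence on a maximal interval, identify the singular or asymptotic behaviour at each endpoint, and classify the resulting singularity/ancient-solution type by imitating the proofs of Theorems \ref{theo:7}, \ref{theo:3} and \ref{theo:2}, adapted to $C=0$.

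For the forward direction, the key observation is that one of $\dot{x}_1=-Ay^2$ or $\dot{x}_2=-D+By$ is bounded above by a strictly negative constant depending on the sign of $y(0)-\bar{y}$. If $y(0)>\bar{y}$, then $y(t)>\bar{y}$ is preserved, so $\dot{x}_1\leq-A\bar{y}^2<0$ forces $x_1(t)\to0$ in finite time $T<\infty$, and the identity $x_2=x_1/y<x_1/\bar{y}$ forces $x_2(t)\to 0$ as well. Symmetrically, if $y(0)<\bar{y}$, then $y(t)<\bar{y}$ gives $\dot{x}_2\leq-AD/(A+B)<0$, producing a finite singular time, and $x_1=yx_2\to 0$ since $y$ stays bounded. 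In both subcases, the linearisation argument at the end of the proof of Theorem \ref{theo:7} applies verbatim: substituting the ansatz $x_i(t)=k_i(T-t)+o(T-t)$ into \eqref{eqn:18}--\eqref{eqn:19} with $C=0$ forces $k_1/k_2$ to be a positive root of \eqref{eqn:26}, hence equal to $\bar{y}$. Linear decay on both summands then gives that $T$ is type I and that $G/K$ shrinks to a point with $y(t)\to\bar{y}$.

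For the backward direction in the case $y(0)>\bar{y}$, I will follow the non-existence strategy of Theorem \ref{theo:3}. In reversed time $\tau=-t$, $y(\tau)$ is strictly increasing; since the only non-negative fixed points of the reversed ODE are $0$ and $\bar{y}$, both strictly below the starting value, $y(\tau)$ cannot have a finite limit and must exceed the threshold $D/B>\bar{y}$ in finite $\tau$. Past that threshold, $x_2'(\tau)=D-By<0$ and $x_2''(\tau)=-By'(\tau)<0$, so $x_2$ is concave and strictly decreasing, and therefore vanishes at some finite $\bar{\tau}>0$, ruling out an ancient solution.

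For $y(0)<\bar{y}$, the reversed evolution keeps $y(\tau)\in(0,\bar{y})$ monotonically decreasing, and the bounds $x_1'(\tau)\leq A\bar{y}^2$ and $x_2'(\tau)\leq D$ show that $x_1(\tau),x_2(\tau)$ grow at most linearly, so the solution extends to all $\tau\geq0$: this yields the ancient solution. To prove $y(\tau)\to0$ as $\tau\to\infty$, I will separate variables in $y'=(A+B)y(y-\bar{y})/x_2$; any limit $\ell\in(0,\bar{y})$ would keep the $y$-side integrand bounded, forcing $\int_0^\infty d\tau/x_2(\tau)<\infty$, which is impossible by the linear bound $x_2(\tau)\leq x_2(0)+D\tau$. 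An asymptotic expansion then gives $y(\tau)\sim c/\tau$, so $x_1(\tau)=y(\tau)x_2(\tau)\to cD$ stays bounded above and below while $x_2(\tau)\sim D\tau$. Because $x_1$ does not go to infinity, the curvature contributions coming from the fibre direction $\mathfrak{p}_1$ do not decay, so $|\Rm(g(t))|_{g(t)}$ is bounded below by a positive constant as $t\to-\infty$, which yields $|t|\sup|\Rm(g(t))|_{g(t)}\to\infty$ and classifies the ancient solution as type II. Finally, $y(\tau)\to0$ means the fibre-to-base ratio collapses, so after the natural rescaling Proposition \ref{prop:2} applies and $G/K$ collapses in the Hausdorff-Gromov sense to $G/H$.

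The main obstacle is this last paragraph: extracting the asymptotic $x_1\to$ const, $x_2\sim D\tau$ rigorously from the ODE system, and turning the bound ``$x_1$ bounded'' into a genuine positive lower bound on $|\Rm(g(t))|_{g(t)}$ that forces type II. The Hausdorff-Gromov collapse also has to be interpreted after rescaling, since the literal diameter of $G/K$ is growing rather than shrinking in backward time.
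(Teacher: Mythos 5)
Your proposal follows the paper's own route almost step for step: preserved regions via uniqueness of the ODE, a strictly negative upper bound on $\dot{x}_1$ (resp.\ $\dot{x}_2$) forcing a finite singular time when $y(0)>\bar{y}$ (resp.\ $y(0)<\bar{y}$), the linear-decay ansatz from theorem \ref{theo:7} to identify the limit $\bar{y}$ and the type I rate, the time-reversed concavity argument of theorem \ref{theo:3} to exclude ancient solutions for $y(0)>\bar{y}$, and bounded derivatives in reversed time to produce the ancient solution for $y(0)<\bar{y}$. The one substantive divergence is in the backward asymptotics: the paper asserts that $x_1$ grows logarithmically in $|t|$ and deduces $|\Rm(g(t))|^2\geq 1/|t|$, whereas you derive $y(\tau)\sim c/\tau$ and conclude that $x_1(\tau)=y(\tau)x_2(\tau)$ converges to a finite positive limit. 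Your version is the internally consistent one: once $y(\tau)\sim c/\tau$, the relation $x_1'(\tau)=Ay(\tau)^2=O(\tau^{-2})$ is integrable, so $x_1$ cannot diverge at all; and a fibre metric bounded above and below gives the cleaner estimate $|\Rm(g(t))|\geq\mathrm{const}>0$, which yields type II even more directly than the paper's bound. Your closing reservation about the Hausdorff--Gromov claim is also well placed: proposition \ref{prop:2} requires the fibre to shrink to a point, while here $x_1$ stays bounded away from zero and $x_2\to\infty$, so the collapse statement only makes sense for suitably rescaled metrics (e.g.\ $g(t)/x_2(t)$, under which the fibre does shrink like $1/\tau$) --- a point the paper passes over. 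To make the type II classification fully rigorous you would still want to record that the norm of the curvature of a homogeneous metric $x_1Q_{|_{\mathfrak p_1}}\oplus x_2Q_{|_{\mathfrak p_2}}$ is bounded below by a positive multiple of $1/x_1$ coming from the fibre directions (using $[122]>0$ or the intrinsic curvature of $H/K$), but this is a routine computation and the paper supplies no more detail at this point than you do.
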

\begin{proof}
The HRF in this case corresponds to the following system of nonlinear ODEs:
\begin{align*}
&\dot{x}_1(t)=-Ay^2(t),\\
&\dot{x}_2(t)=-D+By(t).
\end{align*}
We will first show that the HRF always develops a singularity in finite time. 
In fact, if $y(0)<\bar{y}$, then $y(t)<\bar{y}$, which implies that
\begin{equation*}
\dot{x}_2(t)<-D+B\bar{y}<0.
\end{equation*}
Hence the HRF has to stop before $x_2(t)$ becomes zero. 
Moreover, $x_2(t)\rightarrow0$ implies that $x_1(t)\rightarrow0$, because $y(t)=\frac{x_1(t)}{x_2(t)}<\bar{y}$, for all $t$ such that a solution exists. 
On the other hand, if $y(0)>\bar{y}$, we have that
\begin{equation*}
\dot{x}_1(t)<-A\bar{y}^2<0,
\end{equation*}
so the HRF will have to stop before $x_1(t)$ becomes zero. 
This implies that also $x_2(t)$ tends to zero, because $y(t)>\bar{y}$, for all $t$ such that a solution exists. 
Hence, every solution to the HRF will develop a singularity in a finite time $T$ and, as $t\rightarrow T$, $y(t)\rightarrow\bar{y}$. 
Moreover, we can classify this singularity to be of type I, because both $x_1(t)$ and $x_2(t)$ tend to zero linearly in $t$.

We will now investigate the existence of ancient solutions. 
As in the proof of theorem \ref{theo:3}, we can show that there are no ancient solutions when $y(0)>\bar{y}$, because $x_2(t)$ becomes zero in finite time. 
Whereas, if $y(0)<\bar{y}$, both $x_1(\tau)$ and $x_2(\tau)$ are increasing with bounded derivatives for all $\tau\geq0$. 
Hence, there always exists a unique ancient solution to the HRF with initial condition $y(0)$. 
We can then classify this ancient solution as of type II. 
This is due to the fact that, as $t\rightarrow-\infty$, $y(t)\rightarrow0$, which implies that $x_1(t)$ increases logarithmically in $t$, while $x_2(t)$ increases linearly in $t$. 
Hence,
\begin{equation*}
|\Rm(g(t))|_{g(t)}^2\geq\frac{1}{|t|},
\end{equation*}
as $t\rightarrow-\infty$. 
This implies that the ancient solution is of type II.
\end{proof}

\subsection{The scalar curvature}
We are now going to study the behaviour of the scalar curvature under the system \eqref{eqn:18}-\eqref{eqn:19}.
The scalar curvature of $g(t)$ is given by
\begin{equation*}
R(t)=\frac{1}{x_1(t)}\bigg(C\frac{d_1}{2}+D\frac{d_2}{2}\frac{x_1(t)}{x_2(t)}-A\frac{d_1}{2}\frac{x_1(t)^2}{x_2(t)^2}\bigg),
\end{equation*}
We also have the following relation between $A$ and $B$:
\begin{equation}\label{eqn:411}
\frac{d_1}{2}A=\frac{d_2}{4}B.
\end{equation}

We will begin by identifying the regions in our phase space where $R$ is positive or negative. 
The scalar curvature vanishes when
\begin{equation}\label{eqn:410}
C\frac{d_1}{2}+D\frac{d_2}{2}y-A\frac{d_1}{2}y^2=0,
\end{equation}
where $y=\frac{x_1}{x_2}$ and $x_1,x_2$ are standard coordinates in $\mathbb{R}^2$, with $x_1$ as the vertical axis and $x_2$ as the horizontal axis.
The discriminant of \eqref{eqn:410} is given
\begin{equation*}
\frac{d_2^2}{4}D^2+d_1^2AC,
\end{equation*}
which is always strictly positive.
So, \eqref{eqn:410} has always two different solutions, which are given by
\begin{align*}
\bar{y}_1&=\frac{1}{d_1C}\bigg(-\frac{d_2}{2}D+\bigg(\frac{d_2^2}{4}D^2+d_1^2AC\bigg)^{\frac{1}{2}}\bigg),\\
\bar{y}_2&=\frac{1}{d_1C}\bigg(-\frac{d_2}{2}D-\bigg(\frac{d_2^2}{4}D^2+d_1^2AC\bigg)^{\frac{1}{2}}\bigg),
\end{align*}
in the case where $C>0$, and by
\begin{align*}
\tilde{y}_1&=0,\\
\tilde{y}_2&=\frac{d_2}{d_1}\frac{D}{A},
\end{align*}
in the case where $C=0$.
Clearly, $\bar{y}_2$ is negative and $\tilde{y}_2$ is positive.
We also have that $\bar{y}_1$ is positive.
In fact,
\begin{equation*}
\bar{y}_1>\frac{1}{d_1C}\bigg(-\frac{d_2}{2}D+\frac{d_2}{2}D\bigg)=0.
\end{equation*}
The solution to \eqref{eqn:410} define lines through the origin in $\mathbb{R}^2$, which separates the regions in which $R$ is positive and the regions in which $R$ is negative.
The following two pictures illustrate this in the two cases in which $C>0$ and $C=0$:
\begin{center}
\begin{tikzpicture}[>=stealth,scale=.4]
\draw [very thin,->] (-6,0) -- (6,0) node [fill=white,below=1pt] {$\scriptstyle x_2$} coordinate (x axis);
\draw [very thin,->] (0,-6) -- (0,6) node [fill=white,right=1pt] {$\scriptstyle x_1$} coordinate (y axis);
\draw (-3,-6) -- (3,6) (-6,2.5) -- (6,-2.5);
\draw (3,6.2) node [right] {$\scriptstyle y=\bar y_1$} (-6,2.8) node [left] {$\scriptstyle y=\bar y_2$};
\draw (1,4) node [above] {$\scriptstyle R<0$} (-1,-4) node [below] {$\scriptstyle R>0$};
\draw (-4,-2) node [left] {$\scriptstyle R<0$} (4,2) node [right] {$\scriptstyle R>0$};
\draw (-4,0.5) node[right] {$\scriptstyle R>0$} (-3,4.5) node [left] {$\scriptstyle R<0$};
\draw (1,-3.5) node [right] {$\scriptstyle R>0$} (5,-0.7) node [left] {$\scriptstyle R<0$};
\draw (0,-7) node[below] {$C>0$};
\end{tikzpicture}
\begin{tikzpicture}[>=stealth,scale=.4]
\draw [very thin,->] (-6,0) -- (6,0) node [fill=white,below=1pt] {$\scriptstyle x_2$} coordinate (x axis);
\draw [very thin,->] (0,-6) -- (0,6) node [fill=white,right=1pt] {$\scriptstyle x_1$} coordinate (y axis);
\draw (-3,-6) -- (3,6);
\draw (-3,3) node [above]{$\scriptstyle R<0$};
\draw (3,-3) node[above]{$\scriptstyle R>0$};
\draw (3,6.2) node [right]{$\scriptstyle y=\tilde y_2$};
\draw (1,4) node [above] {$\scriptstyle R<0$} (-1,-4) node [below] {$\scriptstyle R>0$};
\draw (-4,-2) node [left] {$\scriptstyle R<0$} (4,2) node [right] {$\scriptstyle R>0$};
\draw (0,-7) node[below] {$C=0$};
\end{tikzpicture}
\end{center}

We will study the behaviour of the scalar curvature in the first quadrant, where both $x_1(t)$ and $x_2(t)$ are positive.
We will distinguish the cases in which $C>0$ and $C=0$.

Let us begin with the case in which $C>0$.
Here we have that $R>0$ if and only if $y<\bar{y}_1$.
From \eqref{eqn:19}, we have that the region in which $x_2(t)$ is decreasing in $t$ is given by
\begin{equation*}
\left\{(x_1,x_2)\in\mathbb{R}^2|x_1<\frac{D}{B}x_2\right\}.
\end{equation*}
We will now show that in this region $R$ is positive.
If $y=\frac{D}{B}$,
\begin{equation*}
R=\frac{1}{x_1}\bigg(\frac{d_1}{2}C+\frac{d_2}{2}\frac{D^2}{B}-\frac{d_1}{2}\frac{D^2}{B^2}A\bigg).
\end{equation*}
By \eqref{eqn:411},
\begin{equation*}
R=\frac{1}{x_1}\bigg(\frac{d_1}{2}C+\frac{d_2}{2}\frac{D^2}{B}-\frac{d_2}{4}\frac{D^2}{B}\bigg)=\frac{1}{x_1}\bigg(\frac{d_1}{2}C+\frac{d_2}{4}\frac{D^2}{B}\bigg)>0.
\end{equation*}
Hence, on the boundary of the region where $x_2(t)$ is decreasing in $t$, $R>0$.
This means that
\begin{equation*}
\frac{D}{B}<\bar{y}_1,
\end{equation*}
which implies that $R$ is positive for every $y<\frac{D}{B}$.
We can easily see the the region $\{y<\frac{D}{B}\}$ is invariant for the system \eqref{eqn:18}-\eqref{eqn:19}.
In fact, the vector field $(\dot{x}_1(t),\dot{x}_2(t))$ on the boundary $\{y=\frac{D}{B}\}$ is given by $(\dot{x}_1(t),0)$, which points towards the interior, as $\dot{x}_1(t)$ is negative.
Note that this region is characterised by $x_2(t)$ decreasing in $t$.
So, this invariant region is located where $R$ is positive.
Moreover, if $y>\frac{D}{B}$, $x_2(t)$ is increasing and the solution is moving towards the boundary of the invariant region.
In theorem \ref{theo:7}, we proved that the solution either cross the $x_2$-axis or goes to the origin in finite time.
This means that the trajectory has to enter the invariant region $\{y<\frac{D}{B}\}$ in finite time.
We then know that if $R$ is negative initially, it will turn positive in finite time, because the solution has to enter the invariant region.

We will now consider the case in which $C=0$.
Here, $R>0$, when $y<\tilde{y}_2$.
By \eqref{eqn:411},
\begin{equation*}
\tilde{y}_2=\frac{d_2}{d_1}\frac{D}{A}=2\frac{d_2}{d_1}\frac{D}{d_2B}d_1=2\frac{D}{B}
\end{equation*}
So, the invariant region in which $x_2(t)$ is decreasing is located where $R(t)$ is positive.
This means that if the scalar curvature is negative initially, then it has to turn positive in finite time.

\section{When the isotropy group is maximal}\label{sec:1}
In this section, we are going to study compact and connected Lie groups $G/K$ such that $G/K$ is effective.
Suppose that $K$ is maximal in $G$ and the isotropy representation of $K$ decomposes into two inequivalent irreducible $\Ad_{|_K}$-invariant summands:
\begin{equation*}
\mathfrak p=\mathfrak p_1\oplus\mathfrak p_2.
\end{equation*}
By our assumptions, the structure constants $[112]$ and $[122]$ are both nonzero. 
Then,
\begin{equation*}
g(t)=x_1(t)Q_{|_{\mathfrak p_1}}\oplus x_2(t)Q_{|_{\mathfrak p_2}}
\end{equation*}
is a solution to the Ricci flow on $G/K$ if and only if $x_1(t)$ and $x_2(t)$ satisfy the following system of nonlinear ODEs:
\begin{align}
\dot{x}_1(t)&=-\bigg(b_1-\frac{[111]}{2d_1}-\frac{[122]}{d_1}\bigg)+\frac{[112]}{d_1}\frac{x_2(t)}{x_1(t)}-\frac{[122]}{2d_1}\frac{x_1(t)^2}{x_2(t)^2},\label{eqn:7}\\
\dot{x}_2(t)&=-\bigg(b_2-\frac{[222]}{2d_2}-\frac{[112]}{d_2}\bigg)+\frac{[122]}{d_2}\frac{x_1(t)}{x_2(t)}-\frac{[112]}{2d_2}\frac{x_2(t)^2}{x_1(t)^2},\label{eqn:8}
\end{align}
together with the condition $x_1(t), x_2(t)>0$. 

Let
\begin{align}
A_1&=b_1-\frac{[111]}{2d_1}-\frac{[122]}{d_1},\,\,B_1=\frac{[112]}{d_1},\,\,C_1=\frac{[122]}{2d_1},\label{eqn:427}\\
A_2&=b_2-\frac{[222]}{2d_2}-\frac{[112]}{d_2},\,\,B_2=\frac{[122]}{d_2},\,\,C_2=\frac{[112]}{2d_2}.\label{eqn:428}
\end{align}
\begin{remark}\label{rem:1}
Because of the relations \eqref{eqn:5} and the fact that $K$ is maximal in $G$, the quantities $A_i$, $B_i$ and $C_i$, with $i=1,2$, defined above are strictly positive.
\end{remark}
Consider
\begin{equation*}
y(t)=\frac{x_1(t)}{x_2(t)}.
\end{equation*}
Then, the system \eqref{eqn:7}-\eqref{eqn:8} becomes
\begin{align}
\dot{x}_1(t)&=-A_1+\frac{B_1}{y(t)}-C_1y^2(t),\label{eqn:10}\\
\dot{x}_2(t)&=-A_2+B_2\,y(t)-\frac{C_2}{y(t)^2}.\label{eqn:11}
\end{align}
The evolution equation for $y(t)$ is given by
\begin{align*}
\dot{y}(t)&=\frac{1}{y(t)x_2(t)}(y(t)\dot{x}_1(t)-y(t)^2\dot{x}_2(t))\\
&=\frac{1}{y(t)x_2(t)}\big(-\big(B_2+C_1\big)\,y(t)^3+A_2\,y(t)^2-A_1y(t)+B_1+C_2\big).
\end{align*}
Let
\begin{align*}
g_1(y(t))&=y(t)\dot{x}_1(t)=-C_1y(t)^3-A_1\,y(t)+B_1,\\
g_2(y(t))&=y(t)^2\dot{x}_2(t)=B_2\,y(t)^3-A_2\,y(t)^2-C_2,
\end{align*}
which are two cubics in $y(t)$ with the following properties. 
The cubic $g_1(y(t))$ tends to $-\infty$ when $y(t)\rightarrow+\infty$ and it equals $B_1>0$ when $y(t)=0$. 
Moreover, $g_1(y(t))$ is monotonically decreasing in $y(t)$.
Hence, it only has one root, which is strictly positive.
On the other hand, the cubic $g_2(y(t))$ tends to $+\infty$ when $y(t)\rightarrow+\infty$ and it becomes $-C_2<0$ when $y(t)=0$.
We also have that $g_2(y(t))$ has two critical points: one at $y(t)=0$, which is a local maximum, and one at a positive $y(t)$, which is a local minimum. 
Hence, as $y(t)$ increases, $g_2(y(t))$ decreases until it reaches its minimum and then it increases monotonically in $y(t)$.
We can then conclude that also $g_2(y(t))$ has only one root, which is strictly positive.
As $y(t)>0$ for all $t$, the zeroes of $g_1(y(t))$ and $g_2(y(t))$ correspond exactly to the critical points of $\dot{x}_1(t)$ and $\dot{x}_2(t)$, respectively. 
We denote these points by $\tilde{y}_1$ and $\tilde{y}_2$, respectively.
Then, note that $y(t)$ is a root of the equation
\begin{equation}\label{eqn:9}
-\big(B_2+C_1\big)\,y(t)^3+A_2\,y(t)^2-A_1y(t)+B_1+C_2=0
\end{equation}
if and only if $g_1(y(t))=g_2(y(t))$. 
We have that the roots of \eqref{eqn:9} are always strictly positive.
In fact, if $y(t)$ is negative, by remark \ref{rem:1}, the above expression is strictly positive. 
Moreover, these roots are located between $\tilde{y}_1$ and $\tilde{y}_2$ defined above. 
In particular, this tells us that $\tilde{y}_1<\tilde{y}_2$. 
In fact, if $\tilde{y}_2<\tilde{y}_1$, $\dot{x}_1(t)$ and $\dot{x}_2(t)$ are positive for all $t$ such that $y(t)\in(\tilde{y}_2,\tilde{y}_1)$. 
Now, the fact that the roots of \eqref{eqn:9} are all strictly positive implies that equation \eqref{eqn:9} has solutions if and only if $G/K$ carries homogeneous Einstein metrics. 
As the space is compact, every invariant Einstein metric has positive scalar curvature. 
If we start the Ricci flow at a homogeneous Einstein metric, the whole space shrinks to a point in finite time. 
So, $x_1(t)$ and $x_2(t)$ cannot be increasing when $y(t)$ equals to an invariant Einstein metric. 
Hence $\tilde{y}_1<\tilde{y}_2$ and $\dot{x}_1(t)$ and $\dot{x}_2(t)$ are negative for all $t$ such that $y(t)\in(\tilde{y}_1,\tilde{y}_2)$. 

By the above discussion, $G/K$ carries at most three $G$-invariant Einstein metrics, up to scaling, which correspond to the roots of \eqref{eqn:9}.
In particular, $G/K$ carries at least one $G$-invariant Einstein metric (cf. \cite[Theorem 2.2]{Wang-Ziller:1}). 
In fact, 
\begin{equation*}
-\big(B_2+C_1\big)\,y^3+A_2\,y^2-A_1y+B_1+C_2
\end{equation*}
tends to $-\infty$ when $y\rightarrow+\infty$ and it becomes $B_1+C_2>0$ when $y=0$, which implies that it has to have at least one positive root.
Hence we need to distinguish the following three cases:
\begin{enumerate}
\renewcommand{\theenumi}{(\alph{enumi}}
\setcounter{enumi}{\value{enumi_non_max}}
\item $G/K$ carries three homogeneous Einstein metrics, up to scaling;
\item $G/K$ carries two homogeneous Einstein metrics, up to scaling;
\item $G/K$ carries one homogeneous Einstein metric, up to scaling.
\end{enumerate}
\begin{remark}
Note that $y(t)$ is monotone along any solution to the HRF. 
This is due to the uniqueness of the solution and the fact that the critical points of $y(t)$ correspond to homogeneous Einstein metrics on $G/K$. 
\end{remark}
Before considering the three cases listed above separately, some remarks about the ODE are as follows. 
By standard ODE theory, there exists $T\leq\infty$ such that there exists a unique solution to the HRF on $G/K$ which is defined on the maximal time interval $[0,T)$.
Moreover, the solution exists as long as both functions are positive and the norm of the solution is bounded.
Consider the phase space 
\begin{equation*}
X=\{(x_1,x_2)\in\mathbb R^2,x_1,x_2>0\}, 
\end{equation*}
to which the solution $(x_1(t),x_2(t))$ to the HRF belongs. 
We have that $x_1=\tilde{y}_1x_2$ and $x_1=\tilde{y}_2x_2$ define two lines in $X$, which separate the regions in which $x_1(t)$ and $x_2(t)$ are monotonically increasing or decreasing.
As $\tilde{y}_1<\tilde{y}_2$, $X$ can be divided into three connected regions:
\begin{enumerate}
\item $X_1=\{(x_1,x_2)\in X, x_1<\tilde{y}_1x_2\}$,
\item $X_2=\{(x_1,x_2)\in X, \tilde{y}_1x_2<x_1<\tilde{y}_2x_2\}$,
\item $X_3=\{(x_1,x_2)\in X, x_1>\tilde{y}_2x_2\}$.
\end{enumerate}
We note that the only region which is invariant under the HRF is the one given by $X_2$ above, as the tangent vector $(\dot{x}_1(t),\dot{x}_2(t))$ on boundary
\begin{equation*}
\{(x_1,x_2)\in X, x_1=\tilde{y}_1x_2\}\cup\{(x_1,x_2)\in X, x_1=\tilde{y}_2x_2\}
\end{equation*}
of $X_2$ always points inside it.
Moreover, in this region both $x_1(t)$ and $x_2(t)$ are monotonically decreasing.
On the other hand, $X_1$ is characterised by $x_1(t)$ being monotonically increasing and bounded and $x_2(t)$ being monotonically decreasing.
Whereas, in $X_3$, $x_1(t)$ is monotonically decreasing and $x_2(t)$ is monotonically increasing and bounded.
We then have that if $(x_1(0),x_2(0))$ belongs to $X_1$ or $X_3$ above, the HRF $(x_1(t),x_2(t))$ with this initial condition will always enter the region $X_2$ in finite time and will stay there.
Once in $X_2$, as $x_1(t)$ and $x_2(t)$ are monotonically decreasing in $t$, the solution to the HRF exists and it is unique as long as both $x_1(t)$ and $x_2(t)$ are strictly positive.
By the mean value theorem and \eqref{eqn:7}-\eqref{eqn:8} $x_1(t)$ and $x_2(t)$ can only go to zero simultaneously, as $t\rightarrow T$, and in such a way that $\frac{x_1(t)}{x_2(t)}$ remains bounded and strictly positive. 
Hence, the flow will stop at a finite time $T$, which is characterised by both $x_1(t)$ and $x_2(t)$ becoming zero.

\subsection{Case (d)}
In this case, there are three different solutions to \eqref{eqn:9}. 
We will denote them by $y_1,y_2$ and $y_3$. 
Suppose without loss of generality that $y_1<y_2<y_3$. 
The evolution equation of $y(t)$ is given by
\begin{equation*}
\dot{y}(t)=-\frac{1}{x_2(t)y(t)}\big(B_2+C_1\big)\big(y(t)-y_1\big)\big(y(t)-y_2\big)\big(y(t)-y_3\big).
\end{equation*}
We have four possible initial conditions:
\begin{enumerate}
\renewcommand{\theenumi}{(d)(\arabic{enumi}}
\item $y(0)<y_1$,
\item $y_1<y(0)<y_2$,
\item $y_2<y(0)<y_3$,
\item $y(0)>y_3$.
\end{enumerate}
\begin{remark}
We observe that if we start the Ricci flow at $y(0)=y_i$, then the solution will be given by $y(t)=y_i$, for all $i=1,\dots,4$.
\end{remark}
Because of the above remark and the uniqueness of the solution, we have the following proposition.
\begin{proposition}
The initial conditions \textup{(d)(1)}, \textup{(d)(2)}, \textup{(d)(3)} and \textup{(d)(4)} are preserved under the HRF.
\end{proposition}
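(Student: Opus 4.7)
The plan is to run the same uniqueness-of-ODE argument used in the proof of Lemma \ref{lemma:5} and Proposition \ref{prop:3}, adapted to the present cubic setting with three positive roots $y_1 < y_2 < y_3$. Concretely, I would first observe that the roots $y_1, y_2, y_3$ of \eqref{eqn:9} are exactly the equilibria of the ODE for $y(t)$: at each $y_i$, the corresponding homogeneous Einstein metric just rescales homothetically, so the constant function $y(t) \equiv y_i$ is a genuine solution of the full system \eqref{eqn:10}-\eqref{eqn:11} (with $x_1(t)$ and $x_2(t)$ evolving linearly in $t$). This is exactly the content of the remark immediately preceding the proposition.

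Next I would invoke uniqueness of solutions to the ODE system \eqref{eqn:7}-\eqref{eqn:8} on the domain where $x_1, x_2 > 0$: the right-hand sides are smooth there, so through any initial datum there passes exactly one integral curve. Suppose now for contradiction that the initial condition $y(0)$ lies in one of the open intervals $(-\infty, y_1)$, $(y_1, y_2)$, $(y_2, y_3)$ or $(y_3, +\infty)$ (intersected with $\{y > 0\}$) but that $y(t^\ast) = y_i$ for some $t^\ast$ in the interval of existence and some $i \in \{1,2,3\}$. Then the trajectory $(x_1(t), x_2(t))$ passes at time $t^\ast$ through a point of the ray $\{x_1 = y_i x_2\}$, which is also traversed at the same time by the homothetic Einstein solution with ratio $y_i$. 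By uniqueness these two solutions coincide, forcing $y(t) \equiv y_i$ and in particular $y(0) = y_i$, a contradiction.

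Therefore $y(t) \neq y_j$ for every $j \in \{1,2,3\}$ and every $t$ in the interval of existence. Since $y(t)$ is continuous in $t$ and cannot cross any of the values $y_1, y_2, y_3$, it must remain in the same connected component of $(0,\infty) \setminus \{y_1, y_2, y_3\}$ as $y(0)$, which is exactly the statement that each of the initial conditions (d)(1)-(d)(4) is preserved under the HRF. No step here is really an obstacle; the only thing to be careful about is that the argument takes place inside the open domain $\{x_1, x_2 > 0\}$ where the ODE is smooth and uniqueness applies, which is guaranteed throughout the maximal time interval of existence.
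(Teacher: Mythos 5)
Your proposal is correct and follows exactly the paper's argument: the paper derives the proposition from the preceding remark (that $y(t)\equiv y_i$ is the solution when $y(0)=y_i$) together with uniqueness of solutions to the ODE system, which is precisely the fixed-point-plus-uniqueness reasoning you spell out. You simply make explicit the step the paper leaves implicit, namely that a trajectory touching the ray $\{x_1=y_ix_2\}$ would have to coincide with the homothetic Einstein solution through that point.
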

We will now analyse these different initial conditions separately. 
We begin by noticing the following. 
If $y(0)$ satisfies \textup{(d)(1)} or \textup{(d)(3)}, then $y(t)$ will be increasing in $t$. 
On the contrary, if $y(0)$ satisfies \textup{(d)(2)} or \textup{(d)(4)}, then $y(t)$ will be decreasing in $t$. 
Moreover, as
\begin{align*}
&\ddot{x}_1(t)=-\frac{B_1}{y(t)^2}\dot{y}(t)-2C_1y(t)\dot{y}(t)=-\bigg(\frac{B_1}{y(t)^2}+2C_1y(t)\bigg)\dot{y}(t),\\
&\ddot{x}_2(t)=B_2\dot{y}(t)+2\frac{C_2}{y(t)^3}\dot{y}(t)=\bigg(B_2+2\frac{C_2}{y(t)^3}\bigg)\dot{y}(t),
\end{align*}
we also have that $\tilde{y}_1$ and $\tilde{y}_2$ are maximum points of $x_1(t)$ and $x_2(t)$, respectively.
By performing an ODE analysis of \eqref{eqn:10}-\eqref{eqn:11} very similar to the one previously done, we can prove the following theorem.
\begin{theorem}
If \textup{(d)(1)} is satisfied, then there exists $T<\infty$ such that there exists a unique solution to the HRF on $G/K$ defined on the maximal time interval $[0,T)$. 
$T$ is a type I singularity and, as $t\rightarrow T$, the whole space shrinks to a point and $y(t)$ approaches the invariant Einstein metric $y_1$. 
In this case, there are no ancient solutions.

If \textup{(d)(2)} is satisfied, then there exists $T<\infty$ such that there exists a unique type I ancient solution to the HRF on $G/K$ which defined on the maximal time interval $(-\infty,T)$. 
$T$ is a type I singularity and, as $t\rightarrow T$, $G/K$ shrinks to a point and $y(t)\rightarrow y_1$. 
Moreover, as $t\rightarrow-\infty$, $y(t)\rightarrow y_2$.

If \textup{(d)(3)} is satisfied, then there exists $T<\infty$ such that there exists a unique type I ancient solution to the HRF on $G/K$ which is defined on the maximal time interval $(-\infty,T)$. 
$T$ is a type I singularity and, as $t\rightarrow T$, $G/K$ shrinks to a point and $y(t)\rightarrow y_3$. 
Furthermore, as $t\rightarrow-\infty$, $y(t)\rightarrow y_2$.

Finally, if \textup{(d)(4)} is satisfied, then there exists $T<\infty$ such that there exists a unique solution to the HRF on $G/K$ defined on the maximal time interval $[0,T)$. 
$T$ is a type I singularity and, as $t\rightarrow T$, $G/K$ shrinks to a point and $y(t)\rightarrow y_3$. 
There are no ancient solutions in this case.
\end{theorem}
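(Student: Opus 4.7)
The plan is to follow the three-stage blueprint of Theorems \ref{theo:4}, \ref{theo:2} and \ref{theo:3}: pin down the monotonicity of $y(t)$, read off the forward-time singular behaviour from the general analysis preceding the theorem, and then investigate the reversed-time flow. The evolution equation of $y$ is
\[
\dot y(t)=\frac{-(B_2+C_1)}{y(t)\,x_2(t)}\bigl(y(t)-y_1\bigr)\bigl(y(t)-y_2\bigr)\bigl(y(t)-y_3\bigr),
\]
so the sign of $\dot y$ in each of the four invariant regions is unambiguous, giving strict monotonicity of $y(t)$: it increases toward $y_1$ in \textup{(d)(1)}, decreases toward $y_1$ in \textup{(d)(2)}, increases toward $y_3$ in \textup{(d)(3)}, and decreases toward $y_3$ in \textup{(d)(4)}.

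For the forward-time behaviour I would invoke the general ODE analysis carried out immediately before the theorem: in every case the trajectory enters the invariant region $X_2$ in finite time, and once inside both $x_1(t)$ and $x_2(t)$ decrease strictly to zero simultaneously at some finite time $T$, with $y(t)$ bounded away from $0$ and $\infty$. Substituting the asymptotic expansion $x_i(t)=k_i(T-t)+o(T-t)$ into \eqref{eqn:7}-\eqref{eqn:8}, following the same procedure as in the proof of Theorem \ref{theo:7}, forces $(k_1,k_2)$ to correspond to a $G$-invariant Einstein metric; the monotonicity of $y$ then picks out $y_1$ in \textup{(d)(1)}, \textup{(d)(2)} and $y_3$ in \textup{(d)(3)}, \textup{(d)(4)}, and the linear shrinking yields a type~I singularity.

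For the ancient solutions in \textup{(d)(2)} and \textup{(d)(3)} I would switch to $\tau=-t$. The initial condition lies in the invariant region $X_2$, so both $x_1$ and $x_2$ become strictly increasing in $\tau$, while $y(\tau)$ is monotone and trapped between two consecutive roots, hence bounded away from $0$ and $\infty$. The right-hand sides of the reversed system \eqref{eqn:10}-\eqref{eqn:11} are therefore uniformly bounded, so $x_1,x_2$ grow at most linearly and remain positive, extending the solution to all $\tau\ge 0$. Uniqueness of solutions then prevents $y(\tau)$ from crossing $y_2$, so it must approach $y_2$ as $\tau\to+\infty$, and the linear growth of $x_1,x_2$ makes the ancient solution of type~I.

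The main obstacle, and the step I expect to be the most delicate, is ruling out ancient solutions in \textup{(d)(1)} and \textup{(d)(4)}, modelled on Theorem \ref{theo:3}. For \textup{(d)(1)}: suppose an ancient solution exists, so that in backward time $y(\tau)$ is monotone decreasing with a limit $y_\infty\in[0,y(0))$; since all three roots of \eqref{eqn:9} exceed $y(0)$, $y_\infty$ is not an Einstein value. If $y_\infty>0$, boundedness of $x_1'$ and $x_2'$ forces $x_1(\tau)$ and $x_2(\tau)$ to grow at most linearly, and matching their limiting slopes with the ratio $y_\infty$ recovers $g_1(y_\infty)=g_2(y_\infty)$, a contradiction. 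Hence $y_\infty=0$, but then the singular $-B_1/y$ term in \eqref{eqn:10} sends $x_1'(\tau)\to-\infty$, forcing $x_1(\tau)$ to vanish in finite $\tau$. Case \textup{(d)(4)} is symmetric: the monotone increase of $y(\tau)$ forces $y_\infty=+\infty$ (no Einstein value exceeds $y_3$), and then \eqref{eqn:11} drives $x_2'(\tau)\to-\infty$, sending $x_2(\tau)$ to zero in finite $\tau$.
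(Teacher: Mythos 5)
Your proposal is correct and follows essentially the same route as the paper, which itself only refers to "an ODE analysis very similar to the one previously done": you reuse the invariant-region discussion preceding the theorem for the forward-time shrinking and type~I classification, the reversed-time boundedness argument of Theorem \ref{theo:4} for the ancient solutions in \textup{(d)(2)}--\textup{(d)(3)}, and the non-existence mechanism of Theorem \ref{theo:3} (here with $y(\tau)\rightarrow 0$ collapsing $x_1$ in case \textup{(d)(1)} and $y(\tau)\rightarrow+\infty$ collapsing $x_2$ in case \textup{(d)(4)}). The adaptation of which variable degenerates in each outer case is the only point requiring care, and you handle it correctly.
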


\subsection{Case (e)}
We suppose now that \eqref{eqn:9} has 2 distinct roots. 
Let us denote them by $y_1$ and $y_2$. 
In this case, we can write the evolution equation of $y(t)$ in the following way:
\begin{equation*}
\dot{y}(t)=-\frac{1}{x_2(t)y(t)}\big(B_2+C_1\big)\big(y(t)-y_1\big)\big(y(t)-y_2\big)^2.
\end{equation*}
We then need to distinguish two possible situations, i.e. $y_1<y_2$ or $y_1>y_2$. 
Moreover, for each one of them, we have three possible initial conditions. 
If $y_1<y_2$, we can have
\begin{enumerate}
\renewcommand{\theenumi}{(e)(\arabic{enumi}}
\item $y(0)<y_1$,
\item $y_1<y(0)<y_2$,
\item $y(0)>y_2$.
\newcounter{enumi_saved}
\setcounter{enumi_saved}{\value{enumi}}
\end{enumerate}
In this case, $y(t)$ will be increasing in $t$ for $y(0)$ satisfying \textup{(e)(1)} and it will be decreasing in $t$ otherwise. 
If $y_1>y_2$, we can have
\begin{enumerate}
\renewcommand{\theenumi}{(e)(\arabic{enumi}}
\setcounter{enumi}{\value{enumi_saved}}
\item $y(0)<y_2$,
\item $y_2<y(0)<y_1$,
\item $y(0)>y_1$.
\end{enumerate}
Then, $y(t)$ will be increasing in $t$ when $y(0)$ satisfies \textup{(e)(4)} or \textup{(e)(5)} and it will be decreasing in $t$ otherwise. 

Note that, in both situations, because of the uniqueness of the solution to the HRF, these initial conditions are preserved forward and backwards in time. 
By studying the ODEs \eqref{eqn:10}-\eqref{eqn:11} in the various possible situations, we can prove the following theorem.
\begin{theorem}
Suppose that $y_1<y_2$ (resp. $y_1>y_2$). 
Then if $y(0)<y_1$ (resp. $y(0)<y_2$), there exists $T<\infty$ such that there exists a unique solution to the HRF on $G/K$ which is defined on the maximal time interval $[0,T)$. 
$T$ is a type I singularity and, as $t\rightarrow T$, $G/K$ shrinks to a point and $y(t)\rightarrow y_1$ (resp. $y(t)\rightarrow y_2$). 
There are no ancient solutions in this case.

If $y_1<y(0)<y_2$ (resp. $y_2<y(0)<y_1$), there exist a positive constant $T<\infty$ such that there exists a unique type I ancient solution on $G/K$ which is defined on the maximal time interval $(-\infty,T)$. $T$ is a type I singularity and, as $T\rightarrow T$, $G/K$ shrinks to a point. Moreover, the HRF flows $y(t)$ from $y_2$ (resp. $y_1$) to $y_1$ (resp. $y_2$), as $t$ goes from $-\infty$ to $T$.

Finally, if $y(0)>y_2$ (resp. $y(0)>y_1$), there exists $T<\infty$ such that there exists a unique solution to the HRF on $G/K$ which is defined on the maximal time interval $[0,T)$. 
$T$ is a type singularity and, as $t\rightarrow T$, $G/K$ shrinks to a point and $y(t)\rightarrow y_2$ (resp. $y(t)\rightarrow y_1$). 
In particular, there are no ancient solutions.
\end{theorem}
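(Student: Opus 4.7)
The plan is to mirror the approach used in Theorems \ref{theo:4}, \ref{theo:2} and \ref{theo:3}, adapting the arguments to the factored evolution equation
\[
\dot y(t)=-\frac{B_2+C_1}{x_2(t)y(t)}\bigl(y(t)-y_1\bigr)\bigl(y(t)-y_2\bigr)^2.
\]
Because the factor $(y-y_2)^2$ is nonnegative and vanishes only at $y_2$, the sign of $\dot y(t)$ is determined by $-(y(t)-y_1)$ away from $y_2$. Since $y\equiv y_i$ is a stationary solution and the HRF has unique trajectories, each of the open regions separated by $y_1$ and $y_2$ is invariant in both time directions; this immediately yields the monotonicity of $y(t)$ asserted by the theorem in each sub-case.

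For the forward direction, I would invoke the general discussion preceding case (d): from any initial condition the trajectory enters the invariant region $X_2$ in finite time and, once there, both $x_1(t)$ and $x_2(t)$ decrease strictly to zero at a common finite time $T$. Monotonicity together with the two-sided bounds on $y(t)$ implied by the invariance of the initial interval ensures that $\lim_{t\to T}y(t)$ exists. Plugging the ansatz $x_i(t)=k_i(T-t)+o(T-t)$ into \eqref{eqn:10}–\eqref{eqn:11} as in the proof of Theorem \ref{theo:7} shows that this limit must satisfy \eqref{eqn:9}, hence equals $y_1$ or $y_2$; the monotonicity then selects the claimed root, and the linear collapse rate of $x_1,x_2$ gives the type~I classification of $T$.

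For the middle case I would switch to $\tau=-t$, obtaining
\[
x'_1(\tau)=A_1-\frac{B_1}{y(\tau)}+C_1y(\tau)^2,\qquad x'_2(\tau)=A_2-B_2y(\tau)+\frac{C_2}{y(\tau)^2}.
\]
Since $y(\tau)$ stays in the compact interval bounded by $y_1$ and $y_2$, both right-hand sides are bounded, and using $\tilde y_1<y_1,y_2<\tilde y_2$ one checks that $x_1,x_2$ are increasing in $\tau$. Standard ODE theory then extends the solution to all $\tau\ge 0$; linear growth of $x_1,x_2$ as $\tau\to\infty$ yields a type~I ancient solution, and $y(\tau)$ must tend to the opposite Einstein root because any other limit would contradict \eqref{eqn:9}. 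For the outer cases, I would argue non-existence exactly as in the proof of Theorem \ref{theo:3}: in backward time $y(\tau)$ moves monotonically toward $0$ or $+\infty$, so one of the terms $B_1/y$ or $C_2/y^2$ (resp.\ $C_1 y^2$ or $B_2 y$) forces $x_1(\tau)$ or $x_2(\tau)$ to become zero in finite backward time, obstructing an ancient extension.

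The step I expect to demand the most care is this backward-time non-existence in the outer regions, because the double root $(y-y_2)^2$ allows $y(\tau)$ to drift away from the Einstein value only slowly. One must show that, despite this slow initial drift, the monotonicity of $y$ combined with the explicit blow-up of one term in \eqref{eqn:10} or \eqref{eqn:11} still forces a finite-time collapse of $x_1$ or $x_2$; this amounts to comparing the rates at which $y(\tau)$ escapes a neighbourhood of the double root against the decay of the relevant $x_i$, and is essentially the quantitative content absent from the analogous discussion in Theorem \ref{theo:3}.
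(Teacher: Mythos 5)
Your proposal is correct and follows essentially the same route the paper takes: the paper gives no separate proof for this theorem, relying instead on the general discussion preceding case (d) (invariance of $X_2$, monotonicity of $y$, linear collapse of $x_1,x_2$) together with the backward-time arguments of Theorems \ref{theo:4}, \ref{theo:2} and \ref{theo:3}, which is exactly what you reconstruct. The step you flag as delicate is in fact unproblematic: in the outer regions the trajectory starts at a fixed positive distance from the double root $y_2$ and moves monotonically away from it, so the non-existence argument (a monotone $y(\tau)$ either converges to a root of \eqref{eqn:9}, which is impossible there, or escapes to $0$ or $+\infty$, forcing $x_1$ or $x_2$ to vanish in finite backward time) does not depend on the rate of drift.
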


\subsection{Case (f)}
In this case, equation \eqref{eqn:9} has exactly one root, which will be denoted by $\bar{y}$. 
We have two possible situations. 
Either $\bar{y}$ has order three or it has order one. 
In the first case, the evolution equation for $y(t)$ is given by
\begin{equation*}
\dot{y}=-\frac{1}{x_2(t)y(t)}\big(B_2+C_1\big)\big(y(t)-\bar{y}\big)^3.
\end{equation*}
In the second case, the evolution equation for $y(t)$ becomes
\begin{equation*}
\dot{y}=-\frac{1}{x_2(t)y(t)}\big(B_2+C_1\big)\big(y(t)-\bar{y}\big)P(y(t)),
\end{equation*}
where $P(y(t))$ is a polynomial of degree two in $y(t)$ which is strictly positive for all $t$. 
In both cases, if $y(0)<\bar{y}$, $y(t)$ will be increasing in $t$, while, if $y(0)>\bar{y}$, $y(t)$ will be decreasing in $t$. 
Note that because of the uniqueness of the solution to the HRF, these initial conditions are preserved. 
Then, the analysis of the ODE system \eqref{eqn:10}-\eqref{eqn:11} leads to the following theorem.
\begin{theorem}
There exists $T<\infty$ such that there exists a unique solution to the HRF on $G/K$ which is defined on the maximal time interval $[0,T)$. 
$T$ is a type singularity and, as $t\rightarrow T$, $G/K$ shrinks to a point and $y(t)\rightarrow\bar{y}$. 
In particular, there are no ancient solutions.
\end{theorem}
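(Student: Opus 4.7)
The plan is to reduce the analysis to the machinery already developed in the earlier cases: forward existence and uniqueness follow from the general ODE discussion preceding case (d), while the non-existence of ancient solutions is handled by an argument modelled on the backward analysis in the proof of Theorem \ref{theo:3}. Standard ODE theory yields a unique solution $(x_1(t), x_2(t))$ on some maximal interval $[0,T)$. The analysis of the regions $X_1, X_2, X_3$ then forces the trajectory into the invariant region $X_2$ in finite time; inside $X_2$ both coordinates are strictly decreasing, and the mean value argument recorded in that general discussion yields $T < \infty$ with $x_1(t), x_2(t) \to 0$ simultaneously while $y(t)$ stays in a compact subinterval of $(0, \infty)$.

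Next I would identify the limit of $y(t)$. Under either form of the $y$-equation displayed in the statement, a sign analysis of the right-hand side shows that $y(t)$ is monotone along the flow and moves toward $\bar{y}$ (using that $\bar{y}$ is the unique positive root of \eqref{eqn:9} in both subcases); combined with boundedness this gives a finite positive limit $y_\ast$. Inserting the ansatz $x_i(t) = k_i(T-t) + o(T-t)$ into \eqref{eqn:7}--\eqref{eqn:8}, exactly as in subcase \ref{case:1c}) of the proof of Theorem \ref{theo:7}, forces $(k_1, k_2)$ to define a $G$-invariant Einstein metric on $G/K$, and since $\bar{y}$ is the only such ratio we conclude $y_\ast = \bar{y}$. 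The linear decay of both coordinates then makes $|\Rm(g(t))|_{g(t)}^2$ behave like $(T-t)^{-2}$ near $T$ as in Claim \ref{claim:3}, so $T$ is a type I singularity and $G/K$ shrinks to a point.

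To rule out ancient solutions I would reparametrise by $\tau = -t$ and imitate the backward analysis of Theorem \ref{theo:3}. Uniqueness of the HRF together with the fact that $\bar{y}$ is the only positive root of \eqref{eqn:9} prevents $y(\tau)$ from ever crossing or equalling $\bar{y}$, so $y(\tau)$ is monotone and moves strictly away from $\bar{y}$ backward in time. If $y(0) > \bar{y}$, then $y(\tau)$ is increasing with no Einstein barrier above, so $y(\tau) \to +\infty$; the backward form of \eqref{eqn:11} gives $x_2'(\tau) = A_2 - B_2\, y(\tau) + C_2/y(\tau)^2 \to -\infty$, which forces $x_2$ to reach zero in finite backward time. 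Symmetrically, if $y(0) < \bar{y}$, then $y(\tau) \to 0$ and the backward form of \eqref{eqn:10} gives $x_1'(\tau) \to -\infty$ through the singular term $-B_1/y(\tau)$, forcing $x_1$ to reach zero in finite backward time. Either way, the solution cannot be extended to all of $(-\infty, T)$.

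The main technical obstacle I anticipate lies in the triple-root subcase, where $\dot y \propto (y-\bar{y})^3$ and the approach $y(t) \to \bar{y}$ is extremely slow. Justifying the simple ansatz $x_i(t) = k_i(T-t) + o(T-t)$ rigorously, rather than taking the leading-order matching on faith as elsewhere in the paper, requires controlling how slowly $y(t)$ leaves the Einstein ratio; the same issue reappears quantitatively in the backward argument, where one needs $y(\tau)$ to escape $\bar{y}$ fast enough that $x_2'(\tau)$ (respectively $x_1'(\tau)$) actually drives the relevant coordinate to zero. This is where I expect the bulk of the careful work to go, although the qualitative conclusions of the theorem are unaffected.
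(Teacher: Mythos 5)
Your proposal is correct and follows essentially the route the paper intends: the paper omits the proof of this theorem entirely, referring back to the general phase-space analysis of the regions $X_1,X_2,X_3$ before case (d) and to the backward-in-time argument of Theorem \ref{theo:3}, which is exactly what you reconstruct (forward trapping in $X_2$ with linear collapse to the unique Einstein ratio $\bar y$, then non-existence of ancient solutions because $y(\tau)$ is driven away from $\bar y$ and one of $x_1,x_2$ vanishes in finite backward time). Your closing remark about controlling the rate at which $y$ approaches or leaves $\bar y$ in the triple-root subcase is a fair observation, but it is no more of a gap in your argument than in the paper's own treatment.
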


\subsection{The scalar curvature}
In this case, the scalar curvature of $g(t)$ is given by
\begin{equation*}
R(t)=\frac{A_1d_1}{2}\frac{1}{x_1(t)}+\frac{d_2A_2}{2}\frac{1}{x_2(t)}-\frac{d_1}{4}B_1\frac{x_2(t)}{x_1(t)^2}-\frac{d_2}{4}B_2\frac{x_1(t)}{x_2(t)^2}.
\end{equation*}
Let us begin by finding the regions in which the scalar curvature has a definite sign.
We have that $R(t)=0$ if and only if
\begin{equation*}
\frac{d_1}{2}A_1+\frac{d_2}{2}A_2\frac{x_1(t)}{x_2(t)}-\frac{d_1}{4}B_1\frac{x_2(t)}{x_1(t)}-\frac{d_2}{4}B_2\frac{x_1(t)^2}{x_2(t)^2}=0.
\end{equation*}
Let $y(t)=\frac{x_1(t)}{x_2(t)}$.
Then $R(t)=0$ if and only if $y(t)$ solves
\begin{equation}\label{eqn:423}
-\frac{d_2}{4}B_2y(t)^3+\frac{d_2}{2}A_2y(t)^2+\frac{d_1}{2}A_1y(t)-\frac{d_1}{4}B_1=0.
\end{equation}
Now, as the cubic
\begin{equation*}
\alpha(y):=-\frac{d_2}{4}B_2y^3+\frac{d_2}{2}A_2y^2+\frac{d_1}{2}A_1y-\frac{d_1}{4}B_1
\end{equation*}
intersects the line $y=0$ in a negative value and it tends to $+\infty$, as $y\rightarrow-\infty$, and to $-\infty$, as $y\rightarrow+\infty$, we have that it has to intersect the $y$-axis in at least one negative point.
By \cite[Theorem 2.2]{Wang-Ziller:1}, we know that there has to exist a region in the first quadrant in which the scalar curvature is positive, because the homogneous space $G/K$ admits at least one invariant Einstein metric.
Moreover, by \cite[Theorem 2.1]{Wang-Ziller:1}, we know that the scalar curvature cannot be bounded from below.
Hence, there has to exists a region in the first quadrant in which the scalar curvature is negative.
This implies that \eqref{eqn:423} has at least one positive root.
So the cubic $\alpha(y)$ meets the $y$-axis in at least two points, one positive and one negative.
We also have that $\alpha(y)=0$ if and only if
\begin{equation*}
-\frac{d_2}{4}B_2y^3=-\frac{d_2}{2}A_2y^2-\frac{d_1}{2}A_1y+\frac{d_1}{4}B_1.
\end{equation*}
Let
\begin{align*}
\alpha_1(y)&=-\frac{d_2}{4}B_2y^3\\
\alpha_2(y)&=-\frac{d_2}{2}A_2y^2-\frac{d_1}{2}A_1y+\frac{d_1}{4}B_1.
\end{align*}
Observe that $\alpha_1(0)=0$, while $\alpha_2(0)=\frac{d_1}{4}B_1>0$.
Moreover, both these two curves tend to $-\infty$ and in such a way that $\alpha_1(y)<\alpha_2(y)$, as $y\rightarrow+\infty$.
As they intersect in one positive $y$ and $\alpha_1(y)<\alpha_2(y)$, as $y\rightarrow+\infty$, they have to intersect another time for $y>0$.
So they intersect into two positive points.
Then, $\alpha(y)=0$ has three roots.
Two of these roots have to be positive and the other one has to be negative.
We have then proved that $R(t)=0$ has three roots in terms of $y(t)$, two of them being positive and one being negative.
Let $\bar{y}_1$, $\bar{y}_2$ and $\bar{y}_3$ denote these three roots.
Suppose without loss of generality that $\bar{y}_1$ and $\bar{y}_2$ are positive and that $\bar{y}_3$ is negative.
We can then write $R(t)$ as
\begin{equation*}
\begin{aligned}
R(t)=&\frac{1}{x_1(t)y(t)}\bigg(\frac{d_1}{2}A_1y(t)+\frac{d_2}{2}A_2y(t)^2-\frac{d_1}{4}B_1-\frac{d_2}{4}B_2y(t)^3\bigg)\\
=&\frac{x_2(t)}{x_1(t)^2}\bigg(-\frac{d_2}{4}B_2\bigg)(y(t)-\bar{y}_1)(y(t)-\bar{y}_2)(y(t)-\bar{y}_3).
\end{aligned}
\end{equation*}
$\bar{y}_i$, with $i=1,2,3$ defines lines through the origin in $\mathbb R^2$, which separates the regions in which the scalar curvature is either positive or negative.
We have that the following lemma is true.
\begin{lemma}
If $x_1(t),x_2(t)>0$, then $R(t)$ is positive in the region where both functions are decreasing in $t$.
\end{lemma}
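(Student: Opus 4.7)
The plan is to recognize the region in question as the invariant set $X_2$ already identified in the paper, namely the set where $\tilde y_1 < y(t) < \tilde y_2$, and then apply a trace identity that rewrites the scalar curvature directly in terms of the time derivatives $\dot x_1$ and $\dot x_2$.

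The key observation is the following identity, valid along any solution of the HRF on $G/K$:
\begin{equation*}
R(t) = -\frac{1}{2}\left(d_1 \frac{\dot x_1(t)}{x_1(t)} + d_2 \frac{\dot x_2(t)}{x_2(t)}\right).
\end{equation*}
This follows by very general principles: the Ricci flow equation $\dot g = -2\Ric(g)$ gives $\dot x_i = -2 x_i\, \rho_i$, where $\rho_i$ is the Ricci eigenvalue of $r_g$ on $\mathfrak{p}_i$; taking the trace of the Ricci endomorphism yields $R = \sum_i d_i \rho_i = -\tfrac{1}{2}\sum_i d_i \dot x_i / x_i$. Alternatively, one can verify the identity by a direct computation starting from \eqref{eqn:10}--\eqref{eqn:11}, using the definitions \eqref{eqn:427}--\eqref{eqn:428} together with the constant-balancing relations $d_1 C_1 = d_2 B_2/2$ and $d_2 C_2 = d_1 B_1/2$ (which follow from $B_2 = [122]/d_2$, $C_1 = [122]/(2d_1)$ and $B_1 = [112]/d_1$, $C_2 = [112]/(2d_2)$). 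Either route produces exactly the formula for $R(t)$ stated just before the lemma.

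With the identity in hand, the proof of the lemma is immediate. By the discussion preceding case (d), the region where both $x_1(t)$ and $x_2(t)$ are strictly decreasing is precisely the invariant region $X_2 = \{\tilde y_1 x_2 < x_1 < \tilde y_2 x_2\}$, characterized by $\dot x_1(t) < 0$ and $\dot x_2(t) < 0$. Since moreover $x_1(t),x_2(t) > 0$ and $d_1,d_2 > 0$, each term in the right-hand side of the identity is strictly positive, so $R(t) > 0$ on this region.

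There is no real obstacle: the only nontrivial input is the trace identity, and once it is established the sign of $R(t)$ reads off from the signs of $\dot x_1(t)$ and $\dot x_2(t)$. The only care required is the algebraic bookkeeping that matches the coefficients coming from $-\tfrac{1}{2}(d_1\dot x_1/x_1 + d_2\dot x_2/x_2)$ with the coefficients $\tfrac{d_1 A_1}{2}$, $\tfrac{d_2 A_2}{2}$, $-\tfrac{d_1 B_1}{4}$, $-\tfrac{d_2 B_2}{4}$ that appear in $R(t)$; this is what uses the two relations between the $B_i$ and $C_i$ noted above, and is entirely routine.
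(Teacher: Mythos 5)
Your proof is correct and is essentially the paper's own argument in disguise: the paper multiplies the two monotonicity inequalities $\dot x_1<0$, $\dot x_2<0$ by positive factors and adds them to recover the expression for $R(t)$, which is exactly the trace identity $R=-\tfrac12\sum_i d_i\dot x_i/x_i$ that you state explicitly (and your balancing relations $d_1B_1=2d_2C_2$, $d_2B_2=2d_1C_1$ are the same ones the paper uses). Your version merely makes the conceptual reason for the positive linear combination explicit.
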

\begin{proof}[Proof of the lemma]
From the evolution equation \eqref{eqn:7}-\eqref{eqn:8}, we have that $x_1(t)$ and $x_2(t)$ are both decreasing in $t$ if and only if
\begin{align*}
-A_1+\frac{B_1}{y(t)}-C_1y(t)^2&<0,\\
-A_2+B_2y(t)-\frac{C_2}{y(t)^2}&<0.
\end{align*}
We can rewrite the above inequalities as follows.
\begin{align*}
A_1y(t)-B_1+C_1y(t)^3&>0,\\
A_2y(t)^2-B_2y(t)^3+C_2&>0.
\end{align*}
Then, using \eqref{eqn:427}-\eqref{eqn:428}, we have that
\begin{align*}
d_1B_1&=2d_2C_2,\\
d_2B_2&=2d_1C_1,
\end{align*}
which implies that $x_1(t)$ and $x_2(t)$ are both decreasing in $t$ if and only if $y(t)$ satisfies the system
\begin{align*}
\frac{d_1}{2}A_1y(t)-\frac{d_1}{2}B_1+\frac{d_2}{4}B_2y(t)^3&>0,\\
\frac{d_2}{2}A_2y(t)^2-\frac{d_2}{2}B_2y(t)^3+\frac{d_1}{4}B_1&>0.
\end{align*}
By putting these two inequalities together, we have that $y(t)$ satisfies the following inequality:
\begin{equation*}
\frac{d_1}{2}A_1y(t)-\frac{d_1}{4}B_1+\frac{d_2}{2}A_2y(t)^2-\frac{d_2}{4}B_2y(t)^3>0,
\end{equation*}
which implies that $R(t)$ has to be positive.
\end{proof}
This lemma implies that the line $x_1=\bar{y}_1x_2$ is located in the region where $x_1(t)$ is increasing and $x_2(t)$ is decreasing, while the line $x_1=\bar{y}_2x_2$ is located in the region where $x_1(t)$ is decreasing and $x_2(t)$ is increasing.
We also have that the following corollary is true.
\begin{corollary}
If $x_1(t),x_2(t)<0$, then $R(t)$ is negative in the region where both functions are decreasing in $t$.
\end{corollary}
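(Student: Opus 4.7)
The plan is to mirror the proof of the preceding lemma, tracking only where the sign of a single factor changes.

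First I would observe that even though $x_1(t),x_2(t)<0$, the ratio
\begin{equation*}
y(t)=\frac{x_1(t)}{x_2(t)}
\end{equation*}
is still strictly positive, and the right-hand sides of \eqref{eqn:10}-\eqref{eqn:11} depend only on $y(t)$. Consequently the inequalities $\dot{x}_1(t)<0$ and $\dot{x}_2(t)<0$ translate, after multiplication by the strictly positive quantities $y(t)$ and $y(t)^2$ respectively, into exactly the same polynomial inequalities in $y(t)$ that appear in the lemma. Combining them with the relations $d_1B_1=2d_2C_2$ and $d_2B_2=2d_1C_1$ coming from \eqref{eqn:427}-\eqref{eqn:428}, I would obtain
\begin{equation*}
\frac{d_1}{2}A_1y(t)-\frac{d_1}{4}B_1+\frac{d_2}{2}A_2y(t)^2-\frac{d_2}{4}B_2y(t)^3>0,
\end{equation*}
exactly as in the lemma.

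Next, I would rewrite the scalar curvature in the factored form already used there,
\begin{equation*}
R(t)=\frac{1}{x_1(t)y(t)}\Bigl(\tfrac{d_1}{2}A_1y(t)+\tfrac{d_2}{2}A_2y(t)^2-\tfrac{d_1}{4}B_1-\tfrac{d_2}{4}B_2y(t)^3\Bigr).
\end{equation*}
The bracketed quantity is positive by the inequality established in the previous step, while the prefactor $\frac{1}{x_1(t)y(t)}$ is now \emph{negative}, since $x_1(t)<0$ and $y(t)>0$. Hence $R(t)<0$, which is the claim.

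I do not expect any serious obstacle. The only point deserving a moment's care is verifying that the passage from the sign conditions on $\dot{x}_1(t)$ and $\dot{x}_2(t)$ to the polynomial inequality on $y(t)$ does not introduce extraneous sign flips; since each multiplication is by a strictly positive quantity, none occur, and the argument goes through verbatim apart from the single sign change in the prefactor of $R(t)$.
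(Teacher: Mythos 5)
Your proof is correct and is exactly the argument the paper intends (the paper states this as an immediate corollary of the preceding lemma without writing out a proof): since the right-hand sides of \eqref{eqn:10}--\eqref{eqn:11} and the ratio $y(t)=x_1(t)/x_2(t)>0$ are unchanged when both $x_1(t)$ and $x_2(t)$ are negative, the lemma's polynomial inequality for $y(t)$ persists, and the only sign change is in the prefactor $\frac{1}{x_1(t)y(t)}$ of the factored scalar curvature, which is now negative. Nothing further is needed.
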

The following picture illustrates the regions in which $R(t)$ is positive and those in which it is negative.
\begin{center}
\begin{tikzpicture}[>=stealth,scale=.4]
\draw [very thin,->] (-6,0) -- (6,0) node [fill=white,below=1pt] {$\scriptstyle x_2$} coordinate (x axis);
\draw [very thin,->] (0,-6) -- (0,6) node [fill=white,right=1pt] {$\scriptstyle x_1$} coordinate (y axis);
\draw (-3,-6) -- (3,6) (-6,2.5) -- (6,-2.5) (-6,-3)--(6,3);
\draw (3,6.2) node [right] {$\scriptstyle y=\bar y_1$} (6,3.2) node [right] {$\scriptstyle y=\bar y_2$} (-6,2.8) node [left] {$\scriptstyle y=\bar y_3$};
\draw (1,4) node [above] {$\scriptstyle R<0$} (-1,-4) node [below] {$\scriptstyle R>0$} (-2,-3) node [left] {$\scriptstyle R<0$};
\draw (-4,-1.5) node [left] {$\scriptstyle R>0$} (4,3) node [left] {$\scriptstyle R>0$} (5,1.5) node [right] {$\scriptstyle R<0$};
\draw (-4,0.5) node[left] {$\scriptstyle R>0$} (-3,4.5) node [left] {$\scriptstyle R<0$};
\draw (1,-3.5) node [right] {$\scriptstyle R>0$} (5,-0.7) node [left] {$\scriptstyle R<0$};
\end{tikzpicture}
\end{center}
We now have to analyse the behaviour of the scalar curvature in the first quadrant, where $x_1(t)$ and $x_2(t)$ are both positive.
In this case, we have that $R(t)>0$ if and only if $\bar{y}_1<y(t)<\bar{y}_2$, which includes in particular the region in which both functions are decreasing in $t$.
As the Ricci flow preserves positivity of the scalar curvature, we know that if $R(0)>0$ then $R(t)>0$, for every $t$ such that a solution exists.
If the solution has negative scalar curvature initially, then it means that the initial condition does not lie in the invariant region.
As the solution has to enter the invariant region in finite time, this means that the scalar curvature has to turn positive in finite time, if it was negative initially.

\section{Blowing up the solution near the singularity}\label{sec:4}
We are now going to analyse the rescaled Ricci flow near the singularity $T$, where the singular behaviours are the ones described in theorem \ref{theo:7}.

Let $T$ be a type I singularity for the HRF on $G/K$.
Consider a sequence $\{(p_j,t_j)\}_{j=1}^{\infty}$, with $p_j\in G/K$ and $t_j\rightarrow T$, such that
\begin{equation*}
|\Rm|(p_j,t_j)=\sup_{p\in G/K,\,t\in[0,t_j]}{|\Rm(g(t))|_{g(t)}(p,t)}\rightarrow+\infty.
\end{equation*}
Let
\begin{equation*}
x_k^j(t)=|\Rm|(p_j,t_j)\,x_k\Bigg(t_j+\frac{t}{|\Rm|(p_j,t_j)}\Bigg),
\end{equation*}
for all $k=1,2$. Let $g^j(t)$ be the Riemannian metric defined by $(x_1^j(t),x_2^j(t))$.
Then,
\begin{equation*}
(N,g_\infty(t),p_\infty)=\lim_{j\rightarrow\infty}{(G/K,g^j(t),p_j)}
\end{equation*}
is an ancient Ricci flow. 
Note that the limit of the pointed convergence is not affected by the location of the $p_j$'s, because we are in the homogeneous case. 
By \cite{Enders-Muller-Topping:1}, we know that $(N, g_\infty(t))$ is a nonflat gradient shrinking Ricci soliton. 

In the case where the whole space shrinks to a point in finite time, it is easy to see that $g_\infty(t)$ is given by a homogeneous Einstein metric (with positive scalar curvature) on $G/K$.

Now, let us consider the other kind of singular behaviour which leads to a type I singularity in the HRF. 
Let $g_\infty(t)$ be given by $(x_1^\infty(t),x_2^\infty(t))$. 
By the proof of theorem \ref{theo:7}, we have that
\begin{equation*}
x_1^\infty(t)=\lim_{j\to\infty}{|\Rm|(p_j,t_j)x_1^j\Bigg(t_j+\frac{t}{|\Rm|(p_j,t_j)}\Bigg)}
\end{equation*}
will be given by a decreasing linear function of $t$. 
On the other hand, we have that
\begin{equation*}
x^\infty_2(t)=\infty,
\end{equation*}
as $x_2(t)$ remains bounded, as $t\rightarrow T$.

Hence, $(N,g_\infty(t))$ will be given by the \emph{rigid Ricci soliton}
\begin{equation*}
\widetilde{N}\times\mathbb R^q,
\end{equation*}
where $\widetilde{N}$ is the homogeneous Einstein manifold defined by $x_1^\infty(t)$, and it corresponds to the fibre $H/K$ in \eqref{eqn:23}, and the flat factor $\mathbb R^q$ is defined by $x_2^\infty(t)$, with $q=\dim(G/H)$.

\section{Remarks on the pseudo-Riemannian case}\label{sec:7}
In \cite{Buzano:2}, the author also considered the situation in which the functions $x_1(t)$ and $x_2(t)$ are not necessarily strictly positive.
Geometrically, this means that we allow the initial metric to be pseudo-Riemannian.
We will now describe briefly the main differences with the Riemannian case.

\subsection{When the isotropy group is maximal}
Let $x_1$ and $x_2$ be coordinates in the phase space with $x_1$ being the vertical axis and $x_2$ the horizontal one.
When both $x_1(t)$ and $x_2(t)$ are negative, the monotone quantity $y(t)$ is used to find immortal solutions.
These solutions are defined on $(-T,+\infty)$, with $0<T<\infty$.
We find that when there exists an immortal solution, as $t\rightarrow+\infty$, the flow always converges to an invariant Einstein metric.
In particular, if there are no invariant Einstein metrics, then there are no immortal solutions.

When $x_1(t)>0$, $x_2(t)<0$ and $C=0$, we prove that every initial condition leads to an immortal solution with curvature which becomes unbounded as $t\rightarrow+\infty$.
Moreover, as $t\rightarrow+\infty$, $(G/K,g(t))$ converges in the Gromov-Hausdorf sense to $G/H$ equipped with an invariant Einstein metric.
If $C>0$, we find that every solution arising in the second quadrant stops in finite time because the metric collapses in the direction of the fibre $H/K$.
In this case, the scalar curvature becomes positive in finite time.

Finally, when $x_1(t)<0$ and $x_2(t)>0$ and $C>0$, every solution comes from one arising in the first quadrant.
Moreover, every solution stops in finite time because $x_2(t)$ becomes zero and the scalar curvature has to turn positive in finite time, if it was negative initially.
If $C=0$, then every initial condition leads to an ancient solution, which converges in the Gromov-Hausdorff sense to $G/H$ with a $G$-invariant Einstein metric, as $t\rightarrow-\infty$.

\subsection{When the isotropy group is not maximal}
In the case where $x_1(t),x_2(t)<0$, we find that for certain classes of initial conditions, there exist immortal solutions to the flow, which always converge to an invariant Einstein metric, as $t\rightarrow+\infty$.
This is done once again by using the monotone quantity $y(t)$.

In the remaining two cases, we find that every solution stops in finite time, because the metric collapses in one direction.
Moreover, for these solutions the scalar curvature always has to turn positive in finite time, if it was negative initially.

\bigskip

We conclude by mentioning that examples of homogenous spaces to which the results of this paper can be applied to can be found in \cite{Wang-Ziller:1}, \cite{Dickinson-Kerr:1} and \cite[Appendix A]{He:1}.

\bibliographystyle{amsplain}
\bibliography{/Users/mariabuzano/Documents/Texinput/citazioni}

\end{document}